\documentclass[12pt, a4paper]{amsart}

\usepackage{amsmath, amssymb, amsthm, amsfonts}
\usepackage{mathrsfs}
\usepackage{mathtools}
\usepackage[top=2.6cm, bottom=2.5cm, left=2.6cm, right=2.6cm]{geometry}

\setlength{\parindent}{1.3em}
\setlength{\parskip}{0.0em}

\theoremstyle{plain}
\newtheorem{thm}{Theorem}
\newtheorem{lem}{Lemma}[section]
\newtheorem{cor}[lem]{Corollary}

\theoremstyle{remark}
\newtheorem{remark}{Remark}

\begin{document}
\makeatletter
\renewcommand{\theequation}{%
\thesection.\arabic{equation}}
\@addtoreset{equation}{section}
\makeatother

\title{The groups $Sp(4n+1)$ and  $Spin(8n-2)$ as framed manifolds} 
\author{Haruo Minami}
\address{H. Minami: Professor Emeritus, Nara University of Education}
\email{hminami@camel.plala.or.jp}
\subjclass[2020]{22E46, 55Q45}
\begin{abstract}
We consider a compact Lie group as a framed manifold equipped with the left invarianat framing $\mathscr{L}$. In a previous paper we have proved that 
the Adams $e_\mathbb{C}$-invariant value of $SU(2n)$ $(n\ge 2)$ gives a generator of the image of $e_\mathbb{C}$ by twisting $\mathscr{L}$ by a certain 
map. In this note we show that in a similar way we can obtain analogous results 
for $Sp(4n+1)$ and $Spin(8n-2)$ $(n\ge 1)$.
\end{abstract}

\maketitle

\section{Introduction and main result}

Let $G$ be a simply-connected compact Lie group of dimension $4l-1$ and of rank $\ge 2$. We regard it as a framed manifold equipped with the left invariant framing $\mathscr{L}$ and write $[G, \mathscr{L}]$ for its bordism class. Then it is well known ~\cite{AS} that 
\[e_\mathbb{C}([G, \mathscr{L}])=0 \] where $e_\mathbb{C} : \pi_{4l-1}^S \to \mathbb{Q}/\mathbb{Z}$ denotes the complex Adams $e$-invariant. In ~\cite{M} we proposed to raise the problem of whether there exists a map
$\lambda : G\to GL(t, \mathbb{R})$ such that the $e_\mathbb{C}$-invariant value of $[G, \mathscr{L}]$ with $\mathscr{L}$ replaced by $\mathscr{L}^\lambda$ gives a generator of the image of $e_\mathbb{C}$, a cyclic subgroup of 
$\mathbb{Q}/\mathbb{Z}$  ~\cite{A}, and verified that this holds true for the case $G=SU(2n)$ $(n\ge 2)$ where $\mathscr{L}^\lambda$ is the  framing obtained by twisting $\mathscr{L}$ by $\lambda$. In this note we consider the cases $G=Sp(4n+1)$ and $G=Spin(8n-2)$ $(n\ge 1)$ where 
$\dim Sp(4n+1)=4(8n^2+5n+1)-1$ and $\dim Spin(8n-2)=4(8n^2-5n+1)-1$.

\begin{thm}  Let $\rho : Sp(4n+1)\to GL(16n+4, \mathbb{R})$ 
be the standard real representation of $Sp(4n+1)$ and 
$\Delta : Spin(8n-2)\to GL(2^{4n-1}, \mathbb{R})$ the spin representation of $Spin(8n-2)$. Then we have
\begin{align*}\tag*{\,(i)}
&e_\mathbb{C}\bigl([Sp(4n+1), \mathscr{L}^{2n\rho}]\bigr)
=(-1)^nB_{8n^2+5n+1}/2(8n^2+5n+1)\qquad (n\ge 1),\\
\tag{ii}
&e_\mathbb{C}\bigl([Spin(8n-2), \mathscr{L}^{(2n-1)\Delta}]\bigr)
=(-1)^nB_{8n^2-5n+1}/2(8n^2-5n+1)\qquad (n\ge 1)
\end{align*}
where $B_l$ denotes the $l$-th Bernoulli number.
\end{thm}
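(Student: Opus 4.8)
The plan is to reduce the theorem, via ~\cite{AS} and the additivity of $e_{\mathbb{C}}$, to a characteristic-number computation in the rational cohomology of $G$. Since $Sp(4n+1)$ and $Spin(8n-2)$ are simply-connected of rank $\ge 2$, ~\cite{AS} gives $e_{\mathbb{C}}([G,\mathscr{L}])=0$, and twisting $\mathscr{L}$ by a representation $\sigma\colon G\to O(t)$ changes only the stable framing, so that in $\pi^{S}_{\dim G}$ one has $[G,\mathscr{L}^{k\sigma}]=[G,\mathscr{L}]+\delta_{G}(k[\sigma])$, where $[\sigma]\in\widetilde{KO}^{-1}(G)$ is the class of $\sigma$ and $\delta_{G}\colon\widetilde{KO}^{-1}(G)\to\pi^{S}_{\dim G}$ is the change-of-framing homomorphism attached to $[G,\mathscr{L}]$. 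As $e_{\mathbb{C}}$ is a homomorphism vanishing on $[G,\mathscr{L}]$, the theorem amounts to evaluating $e_{\mathbb{C}}\bigl(\delta_{G}(2n[\rho])\bigr)$ and $e_{\mathbb{C}}\bigl(\delta_{G}((2n-1)[\Delta])\bigr)$. Here I would use the computation of ~\cite{M}: although carried out there for $G=SU(2n)$, its derivation is formal given the stable parallelizability of $G$ (so that $\widehat{A}(G)=1$), and it adapts to express $e_{\mathbb{C}}(\delta_{G}(u))$, for $u\in\widetilde{KO}^{-1}(G)$, as the reduction modulo $\mathbb{Z}$ of a definite rational characteristic number built from the Chern character of the complexification $u_{\mathbb{C}}\in\widetilde{K}^{-1}(G)$ paired with the fundamental class $[G]$.

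For part (i), $\sigma=\rho$ and $u_{\mathbb{C}}$ is controlled by the symplectic Pontryagin classes of the tautological $\mathbb{H}^{4n+1}$-bundle, which transgress onto the exterior generators of $H^{*}(Sp(4n+1);\mathbb{Q})=\Lambda(a_{3},a_{7},\dots,a_{16n+3})$; concretely $\mathrm{ch}(\rho_{\mathbb{C}})$ is, up to the usual factor, $\sum_{i=1}^{4n+1}(e^{x_{i}}+e^{-x_{i}})$ in the transgressive variables $x_{i}$. Substituting this into the formula of ~\cite{M} one has to determine which monomials in the $x_{i}$ survive the pairing with $[Sp(4n+1)]$; because $\dim Sp(4n+1)=4(8n^{2}+5n+1)-1$ this selects essentially one top-dimensional term, so the value collapses to a rational multiple of a single Bernoulli number, with the multiplicity $2n$ chosen exactly so that the symmetric-function bookkeeping closes up. I would then use the classical identity relating $B_{l}/2l$ to $\zeta(1-2l)$ (equivalently, to the order of $\mathrm{im}\,J$ in dimension $4l-1$) to bring the answer to the stated form $(-1)^{n}B_{8n^{2}+5n+1}/2(8n^{2}+5n+1)$, the sign $(-1)^{n}$ being accounted for by the sign changes $x_{i}\mapsto-x_{i}$ in the symmetrization.

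For part (ii) the same strategy applies with $\sigma=\Delta$, and this is where I expect \emph{the main obstacle}. Since $8n-2\equiv 6\pmod 8$, the spin representation $\Delta=\Delta^{+}$ is of complex type with $\dim_{\mathbb{C}}\Delta=2^{4n-2}$, and $\mathrm{ch}(\Delta)=\tfrac{1}{2}\bigl[\prod_{i=1}^{4n-1}(e^{x_{i}/2}+e^{-x_{i}/2})+\prod_{i=1}^{4n-1}(e^{x_{i}/2}-e^{-x_{i}/2})\bigr]$. Against $H^{*}(Spin(8n-2);\mathbb{Q})=\Lambda(a_{3},\dots,a_{16n-9},e_{8n-3})$ the two products behave quite differently: the symmetric one contributes through the Pontryagin-type generators just as in part (i), whereas the second equals $\pm\bigl(\prod_{i}x_{i}\bigr)\prod_{i}\bigl(\sinh(x_{i}/2)/(x_{i}/2)\bigr)$ and hence registers on the generator $e_{8n-3}$ of degree $2(4n-1)-1$, the class on which the half-integer weights of $\Delta$ act and which makes the $Spin$ computation genuinely different from the $Sp$ one. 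Controlling this Euler-class contribution---i.e.\ evaluating $\langle e_{8n-3}\cdot(\text{a symmetric power series in the }x_{i}),[Spin(8n-2)]\rangle$ modulo the relations in $H^{*}(Spin(8n-2);\mathbb{Q})$---is the delicate step; I would try to organize it as a residue computation, or inductively along $Spin(8n-3)=SO(8n-3)\subset Spin(8n-2)$, and finish with a von~Staudt--Clausen type Bernoulli identity, the multiplicity $2n-1$ again being tuned so that the denominators come out right and the value takes the form $(-1)^{n}B_{8n^{2}-5n+1}/2(8n^{2}-5n+1)$.
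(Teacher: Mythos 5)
Your proposal has a genuine gap at its very first load-bearing step. You reduce everything to a claimed formula expressing $e_{\mathbb{C}}$ of the change-of-framing class $\delta_G(k[\sigma])$ as a rational characteristic number built from $\mathrm{ch}(\sigma_{\mathbb{C}})$ paired with $[G]$, and you attribute its derivation to ~\cite{M} as ``formal''. No such formula is established there or here, and in the linear form you actually use it cannot be correct: for a representation $\sigma$ the class $\beta(\sigma)\in\widetilde{K}^{-1}(G)$ has \emph{primitive} Chern character (Hodgkin), while $H^{\dim G}(G;\mathbb{Q})$ of a group of rank $\ge 2$ contains no nonzero primitive elements, so the top-degree component of $\mathrm{ch}(k\beta(\sigma))$ vanishes and the pairing with $[G]$ is identically zero --- consistent with the vanishing theorem of ~\cite{AS}, but contradicting the nonzero values to be proved. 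The true effect of twisting on $e_{\mathbb{C}}$ for higher-rank groups is a higher-order phenomenon (it lives on decomposable classes), and producing a usable formula for it is precisely the content of the problem; asserting that ``the symmetric-function bookkeeping closes up'' and that the multiplicities $2n$, $2n-1$ are ``tuned so the denominators come out right'' restates the desired conclusion rather than proving it. In addition, your premise that $\lambda\mapsto[G,\mathscr{L}^{\lambda}]$ is affine in $[\lambda]\in\widetilde{KO}^{-1}(G)$ (your homomorphism $\delta_G$) needs justification, and for part (ii) you explicitly leave the decisive step (the contribution of the degree-$(8n-3)$ generator against the half-spin weights) as an intention, so even internally the argument is a plan, not a proof.

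For contrast, the paper does not work on $G$ directly with Chern characters. It chooses a circle subgroup $S\subset G$, forms the line bundle $E=G\times_S\mathbb{C}$ over $G/S$, and constructs an explicit map $\psi$ from a product of $2$-spheres and $2$-tori to $G/S$ which is injective on a dense open set, hence deformable to a degree-one map, and pulls $E$ back to an exterior tensor product of Hopf bundles; this gives $\langle c_1(E)^{2l-1},[G/S]\rangle=\pm 1$ with $l=8n^2\pm 5n+1$. Proposition 2.1 of ~\cite{LS} then converts this Chern number into $e_{\mathbb{C}}([S(E),\Phi_E])=(-1)^nB_l/2l$ for the framing $\Phi_E$ induced from the framed base, and the final step identifies the discrepancy between $\Phi_E$ and $\mathscr{L}$ on $G=S(E)$ as exactly a twist by $2n\rho$, respectively $(2n-1)\Delta$. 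If you want to salvage your approach, you would first have to prove a genuine (necessarily nonlinear) twisting formula for $e_{\mathbb{C}}$ on higher-rank groups and then carry out both evaluations; as written, neither is done.
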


The proof is carried out based on the use of Proposition 2.1 of ~\cite{LS} 
along the same procedure as in the case $G=SU(2n)$. In fact we proceed 
with this by intending to construct a tensor product decomposition of the 
complex line bundle $E$ associated to a certain circle bundle 
$S\to G\to G/S$. This enables us to apply the proposition above to $E$ 
and thereby leads us to the desired conclusion.

\section{Decomposition of $E$ in the symplectic case}

First we consider the symplectic case. 
Let $d(z)=\mathrm{diag}(\bar{z}^{N-1}, z, \ldots, z)\in Sp(N)$ for 
$z\in S^1$ where $S^1$ is the unit circle in $\mathbb{C}$ and 
$\mathrm{diag}(a_1, \ldots, a_N)$ denote the diagonal matrix with diagonal entries $a_i$. Let $S$ be the circle subgroup of $Sp(N)$ generated by $d(z)$. Let $S$ act on $Sp(N)$ by right multiplication. Regard $p : Sp(N)\to Sp(N)/S$ as the principal bundle along with the natural projection. Let $\pi : E=Sp(N)\times_S\mathbb{C}\to Sp(N)/S$ be the canonical line bundle over $Sp(N)/S$ associated to $p$ where $S$ acts on $\mathbb{C}$ as $S^1$, i.e. $d(z)v=zv$ for $v\in \mathbb{C}$. Then it is clear that its unit sphere bundle $S(E)\to Sp(N)/S$ is  naturally isomorphic to $p$ as a principal $S$-bundle. 

Now we look at the case $N=2$ in order to represent the elements of the 2-sphere in analogy the notations in ~\cite{M}. Put
\begin{equation*}
R(rz, u)=\begin{pmatrix} rz & u \\
-\bar{u} & r\bar{z} \end{pmatrix} \in SU(2)\subset Sp(2), \quad
R(sz, jv)=\begin{pmatrix} sz & jv \\
jv & sz \end{pmatrix} \in Sp(2)   
\end{equation*}
where $r, \ s\ge 0, \ z\in S^1,  \ u, \ v\in\mathbb{C}$ and $j$ is the 
quaternion imaginary unit. Then $R(rz, u)d(z)=R(r, uz)$ and 
$R(sz, jv)\,\mathrm{diag}(1, \bar{z}^2)d(z)=R(s, jv\bar{z})$. Here if $r=0$ and $s=0$, then since $|u|=1$ and $|v|=1$, replacing $z'$s by $\bar{u}\bar{z}$ and 
$v\bar{z}$, we have $R(0, uz)d(\bar{u}\bar{z})=R(0, 1)$ and 
$R(0, jv\bar{z})\,\mathrm{diag}(1, (\bar{v}z)^2)d(v\bar{z})=R(0, j)$. 
For brevity we write 
\begin{equation*}
\begin{split}
(r, uz)_R&=R(r, uz) \quad \,\, \text{with} \ uz=1 \ \text{when} \ r=0, \\ 
(s, jv\bar{z})_R&=R(s, jv\bar{z}) \quad \text{with} \ v\bar{z}=1 \ \text{when} \ s=0.
\end{split}
\end{equation*}
Here we assume for later use that either $(r, uz)_R$ with $r=1$ or $(s, jv\bar{z})_R$ with $s=1$ is chosen as the base point of $S^2$ depending on the situation.
Letting $(r, uz)_R$ and $(s, jv\bar{z})_R$ correspond to $(1-2r^2, 2rzu)$ and $(1-2s^2, 2s\bar{z}v)$, respectively, the subspaces of $Sp(2)/S$ consisting of these elements become homeomorphic to $S^2$, respectively. Hence we identify
\[
(r, uz)_R=(1-2r^2, 2ruz), \qquad (s, jv\bar{z})_R=(1-2s^2, 2sv\bar{z}). 
\]
Under these identifications the assignments 
\[R(rz, u)\to (r, uz)_R, \qquad R(sz, jv)\to (s, jv\bar{z})_R\]
along $p : Sp(2)\to Sp(2)/S$, respectively, yield principal $S$-bundles over 
$S^2$ isomorphic to the Hopf bundle $p : S^3\to S^2$. 
With the same notation we write $L$ for the complex line bundles associated to all these principal $S$-bundles constructed in such a manner.

From now, let $G=Sp(4n+1)$ $(n\ge 1)$ and let us aim to show that $E$ can be expressed as isomorphically the tensor product of complex line bundles. 

Let $0\le k \le 4n-1$ and $1\le i \le 4n-k$. Put $n_k=4n-k$. 
For now fix $k$ and put in $G$
\begin{equation*}
R_{i; k}(r_{i; k}z, u_{i; k})=
\left(
\begin{array}{ccccc} 
I_k & 0 & 0 & 0 & 0 \\ 
0 & r_{i; k}z & 0 & u_{i; k} & 0 \\ 
0 & 0 & I_{i-1} & 0 & 0 \\
0 & -\bar{u}_{i; k}& 0 & r_{i; k}\bar{z} & 0\\
0 & 0 & 0 & 0 & I_{n_k-i}
\end{array}
\right)
\end{equation*}
with $R(r_{i; k}z, u_{i; k})\in SU(2)\subset Sp(2)$ and
\begin{equation*}
R_{i; k}(s_{i; k}z, jv_{i; k})=
\left(
\begin{array}{ccccc}  
I_k & 0 & 0 & 0 & 0 \\ 
0 & s_{i; k}z & 0 & jv_{i; k} & 0 \\ 
0 & 0 & I_{i-1} & 0 & 0 \\
0 & \ \, jv_{i; k}& 0 & s_{i; k}z & 0\\
0 & 0 & 0 & 0 & I_{n_k-i}
\end{array} 
\right)
\end{equation*}
with $R(s_{i; k}z, jv_{i; k})\in Sp(2)$ where $I_l$ is the unit matrix of order $l$. Then 
we have  
\begin{equation*}
\textstyle\prod_{i=1}^{n_k}R_{i; k} (r_{i; k}z, u_{i; k})=
\left(
\begin{array}{cccccc} 
I_k & 0 & 0 & 0 & \cdots & 0 \\ 
0 & a_{0, 0 ; k} & a_{0, 1; k} & a_{0, 2; k} &\cdots & a_{0, n_k; k} \\ 
0 & a_{1, 0; k} & a_{1, 1; k}  & a_{1, 2; k} & \cdots & c_{1, n_k; k}  \\
0 & a_{2, 0; k} & 0& a_{2, 2; k} & \cdots & a_{2, n_k; k}  \\
\vdots & \vdots  & \vdots  &\ddots  & \ddots  &   \vdots\\
0 & a_{n_k, 0; k} & 0 & \cdots & 0 & a_{n_k, n_k; k} 
\end{array}
\right)
\end{equation*}
where
\begin{equation*}
\begin{split}
&a_{0, 0; k}=r_{1; k}\cdots r_{n_k; k}z^{n_k},\\
&a_{l, 0; k}=-r_{l+1; k}\cdots r_{n_k; k}\bar{u}_{l; k}z^{n_k-l}
\quad (1\le l\le n_k-1), \hspace{35mm}\\ 
&a_{n_k, 0; k}=-\bar{u}_{n_k; k},\\
&a_{0, 1; k}=u_{1; k},\\
&a_{0, l; k}=r_{1; k}\cdots r_{l-1; k}u_{l; k}z^{l-1} \quad (2\le l\le n_k),\\ 
&a_{l, m; k}=-r_{l+1; k}\cdots r_{m-1; k}\bar{u}_{l; k}u_{m; k}z^{m-l-1}
\quad (1\le l\le m-2, \ 3\le m \le n_k),\hspace{7mm}\\ 
&a_{l, l+1; k}=-\bar{u}_{l; k}u_{l+1; k} \quad (1\le l\le n_k-1), \\
&a_{l, l; k}=r_{l; k}\bar{z} \quad (1\le l\le n_k), \\  
&a_{l, m; k}=0 \quad (l>m).\\
\end{split}
\end{equation*}
and 
\begin{equation*}
\textstyle\prod_{i=1}^{n_k}R_{i; k} (s_{i; k}z, jv_{i; k})=
\left(
\begin{array}{cccccc} 
I_k & 0 & 0 & 0 & \cdots & 0 \\ 
0 & b_{0, 0; k} & jb_{0, 1; k} & jb_{0, 2; k} &\cdots & jb_{0, n_k; k} \\ 
0 & jb_{1, 0; k} & b_{1, 1; k}  & b_{1, 2; k} & \cdots & b_{1, n_k; k}  \\
0 & jb_{2, 0; k} & 0& b_{2, 2; k} & \cdots & b_{2, n_k; k}  \\
\vdots & \vdots  & \vdots  &\ddots  & \ddots  &   \vdots\\
0 & jb_{n_k, 0; k} & 0 & \cdots & 0 & b_{n_k, n_k; k} 
\end{array}
\right)
\end{equation*}
where
\begin{equation*}
\begin{split}
&b_{0, 0; k}=s_{1; k}\cdots s_{n_k; k}z^{n_k},\\
&b_{l, 0; k}=s_{l+1; k}\cdots s_{n_k; k}\bar{v}_{l; k}z^{n_k-l}
\quad (1\le l\le n_k-1), \hspace{35mm}\\ 
&b_{n_k, 0; k}=\bar{v}_{n_k; k},\\
&b_{0, 1; k}=v_{1; k},\\
&b_{0, l; k}=s_{1; k}\cdots s_{l-1; k}v_{l; k}\bar{z}^{l-1} \quad (2\le l\le n_k),\\ 
&b_{l, m; k}=-s_{l+1; k}\cdots s_{m-1; k}\bar{v}_{l; k}v_{m; k}\bar{z}^{m-l-1}
\quad (1\le l\le m-2, \ 3\le m \le n_k),\hspace{7mm}\\ 
&b_{l, l+1; k}=-\bar{v}_{l; k}v_{l+1; k} \quad (1\le l\le n_k-1), \\
&b_{l, l; k}=s_{l; k}z \quad (1\le l\le n_k), \\  
&b_{l, m; k}=0 \quad (l>m).\\
\end{split}
\end{equation*}
From these calculation results we see that if we put 
\begin{equation*}
R_k(r_{i; k}z, u_{i; k})=\textstyle\prod_{i=1}^{n_k}R_{i; k} (r_{i; k}z, u_{i; k}), \quad  
R_k(s_{i; k}z, jv_{i; k})=\textstyle\prod_{i=1}^{n_k}R_{i; k} (s_{i; k}z, jv_{i; k}), 
\end{equation*}
then it follows that  
\begin{equation}
\begin{split}
R_k(r_{i; k}z, u_{i; k})\,
\mathrm{diag}(1, \overset{(k)}\ldots, 1, \bar{z}^{n_k}, z, \ldots, z)
&=R_k(r_{i; k}, u_{i; k}z^i),\\
R_k(s_{i; k}z, jv_{i; k})\,
\mathrm{diag}(1, \overset{(k)}\ldots, 1, \bar{z}^{n_k}, \bar{z}, \ldots, 
\bar{z})&=R_k(s_{i; k}, jv_{i; k}\bar{z}^i).
\end{split}
\end{equation}  
Moreover due to performing the calculation above we know that 
the right hand sides of (2.1), respectively, can be written in the forms:    
\begin{equation}
R_k(r_{i; k}, u_{i; k}z^i)=
\left(
\begin{array}{cccccc} 
I_k & 0 & 0 & 0 & \cdots & 0 \\ 
0 & \underline{a}_{0, 0; k} & \ast & \ast & \cdots & \ast \\ 
0 & \underline{a}_{1, 0; k} & \ast & \ast & \cdots & \ast  \\
0 & \underline{a}_{2, 0; k} & 0 & \ast & \cdots & \ast  \\
\vdots & \vdots  & \vdots  &\ddots  & \ddots  &  \vdots\\
0 & \underline{a}_{n_k, 0; k} & 0 & \cdots & 0 & \ast 
\end{array}
\right)
\end{equation}
with $\ast'$s in $\mathbb{C}$ where
\begin{equation*}
\begin{split}
&\underline{a}_{0, 0; k}= r_{1; k}\cdots r_{n_k; k}, \  
\underline{a}_{l, 0; k}= -r_{l+1; k}\cdots r_{n_k; k}\bar{u}_{l; k}\bar{z}^l  
\quad (1\le l\le n_k-1), \\
&\underline{a}_{n_k, 0; k}= -\bar{u}_{n_k; k}\bar{z}^{n_k}
\end{split}
\end{equation*}
and 
\begin{equation}
R_k(s_{i; k}, jv_{i; k}\bar{z}^i)=
\left(
\begin{array}{cccccc} 
I_k & 0 & 0 & 0 & \cdots & 0 \\ 
0 & \underline{b}_{0, 0; k} & j\underline{b}_{0, 1; k} & j\underline{b}_{0, 2; k} &\cdots & j\underline{b}_{0, n_k; k} \\ 
0 & j\underline{b}_{1, 0; k} &  \underline{b}_{1, 1; k}  & \ast & \cdots & \ast  \\
0 & j\underline{b}_{2, 0; k} & 0& \underline{b}_{2, 2; k} & \cdots & \ast  \\
\vdots & \vdots  & \vdots  &\ddots  & \ddots  &  \vdots\\
0 & j\underline{b}_{n_k, 0; k} & 0 & \cdots & 0 & \underline{b}_{n_k, n_k; k} 
\end{array}
\right)
\end{equation}
with $\ast'$s in $\mathbb{C}$ where
\begin{equation*}
\begin{split}
&\underline{b}_{0, 0; k}=s_{1; k}\cdots s_{n_k; k}, \  
\underline{b}_{l, 0: k}=s_{l+1; k}\cdots s_{n_k; k}v_{l; k}\bar{z}^l \quad (1\le l\le n_k-1), 
\\
&\underline{b}_{n_k, 0; k}=v_{n_k; k}\bar{z}^{n_k},\\
&\underline{b}_{0, 1; k}=v_{1; k}\bar{z}, \ \underline{b}_{0, l; k}
=s_{1; k}\cdots s_{l-1; k}v_{l; k}\bar{z}^l 
\quad (2\le \mathit{l}\le n_k),\\
&\underline{b}_{l, l; k}=s_{i; k}\quad  (1\le l\le n_k).
\end{split}
\end{equation*}

Put $D_0(z)=I_{4n+1}$, $D_k(z)=\mathrm{diag}(z^{4n}, \bar{z}, 
\overset{(k-1)}\ldots, \bar{z}, \bar{z}^{4n+1-k}, 1, \ldots, 1)$ 
$(1\le k\le 4n-1)$. Let $1\le i\le n_k$ and put 
\[
R^{\{i\}}_k(r_{i; k}z, u_{i; k})=\biggl(\textstyle\prod_{l=1}^{n_k}R_{l; k} (r_{l; k}z, u_{l; k})\biggr)D_k(z)
\]
with $r_{l; k}=1$ for all $l, k$ except for $l=i$ and 
\[
R^{\{i\}}_k(s_{i; k}z, jv_{i; k})=\biggl(\textstyle\prod_{l=1}^{n_k}R_{l; k} (s_{l; k}z, jv_{l; k})\biggr)
\mathrm{diag}(1, \overset{(k+1)}\ldots, 1, \bar{z}^2, \ldots, \bar{z}^2)D_k(z) 
\]
with $s_{l; k}=1$ for all $l, k$ except for $l=i$.
Then we know from (2.1) that
\begin{equation}
\begin{split}
R^{\{i\}}_k(r_{i; k}z, u_{i; k})d(z)&=R_{i; k}(r_{i; k}, u_{i; k}z^{i}), \\
R^{\{i\}}_k(s_{i; k}z, jv_{i; k})d(z)&=R_{i; k}(s_{i; k}, jv_{i; k}\bar{z}^{i})
\end{split}
\end{equation}
since $R_{l; k}(r_{l; k}, u_{l; k}z^l)=I_{4n+1}$ and 
$R_{l; k}(s_{l; k}, jv_{l; k}\bar{z}^l)=I_{4n+1}$ when $r_{l; k}=1$ and $s_{l; k}=1$, 
respectively. Now through the elementary matrix calculations we have   
\begin{equation}
\begin{split}
d(x)R_{i; 0}(r_{i; 0}z, u_{i; 0}a^{4n+1})&=R_{i; 0}(r_{i; 0}z, u_{i; 0})d(x),\\
d(x)R_{i; k}(r_{i; k}z, u_{i; k})&=R_{i; k}(r_{i; k}z, u_{i; k})d(x) \quad \text{if} \ k\ge 1, \\
\bar{d}(x)R_{i; 0}(s_{i; 0}z, jv_{i; 0}\bar{x}^{8n})&=R_{i; 0}(s_{i; 0}, jv_{i; 0})\bar{d}(x),
\\
d(x)R_{i; k}(s_{i; k}z, jv_{i; k}\bar{x}^2)&=R_{i; k}(s_{i; k}z, jv_{i; k})d(x) \quad 
\text{if} \ k\ge 1 \quad (x\in S^1),
\end{split}
\end{equation}
where $\bar{d}(x)=\mathrm{diag}(\bar{x}^{4n}, \bar{x}, \ldots, \bar{x}) 
=\mathrm{diag}(1, \bar{x}^2, \ldots, \bar{x}^2)d(x)$.

If we put  
\begin{equation*}
\begin{split}
R_k(r_{i; k}z_i, u_{i; k})&=\textstyle\prod_{i=1}^{n_k}
R^{\{i\}}_k(r_{i; k}z_i, u_{i; k}), \\
R_k(s_{i; k}z_i, jv_{i; k})&=\textstyle\prod_{i=1}^{n_k}
R^{\{i\}}_k(s_{i; k}z_i, jv_{i; k}) \quad (z_i\in S^1),
\end{split}
\end{equation*}
then using the first equation of (2.4) and the former two equations of (2.5) 
we have
\begin{equation*}
\begin{split}
R_0(r_{i; 0}z_i, u_{i; 0})d(z_1)\cdots d(z_{4n})
&=\textstyle\prod_{i=1}^{4n}
R_{i; 0}(r_{i; 0}, u_{i; 0}(z_1\cdots z_{i-1})^{4n+1}z_i^i), \\
R_k(r_{i; k}z_i, u_{i; k})d(z_1)\cdots d(z_{n_k})
&=\textstyle\prod_{i=1}^{n_k}R_{i; k}(r_{i; k}, u_{i; k}z_i^i)\quad 
\text{if} \ k\ge 1
\end{split}
\end{equation*}
where $z_1\cdots z_{i-1}=1$ when $i=0$. Similarly using the other ones 
of (2.4) and (2.5) we have
\begin{equation*}
\begin{split}
R_0(s_{i; 0}z_i, jv_{i; 0})d(z_1)\cdots d(z_{4n})
&=\textstyle\prod_{i=1}^{4n}
R_{i; 0}(s_{i; 0}, jv_{i; 0}(\bar{z}_1\cdots\bar{z}_{i-1})^{8n}\bar{z}_i^i), \\
R_k(s_{i; k}z_i, jv_{i; k})d(z_1)\cdots d(z_{n_k})
&=\textstyle\prod_{i=1}^{n_k}R_{i; k}(r_{i; k}, jv_{i; k}(\bar{z}_1\cdots
\bar{z}_{i-1})^2\bar{z}_i^i)\quad \text{if} \ k\ge 1
\end{split}
\end{equation*}
where $z_1\cdots z_{i-1}=1$ when $i=0$. In the above, for example, if any $r_{i; k}$ is zero, then since $|u_{i; k}|=1$, we can choose $z_{i; k}$ which satisfies 
$u_{i; 0}(z_1\cdots z_{i-1})^{4n+1}z_i^i=1$ or $u_{i; k}z_i^i=1$ depending on whether 
$k=0$ or not. Hence the product component $R_{i; k}(r_{i; k}, w_{i; k})$ with 
$r_{i; k}=0$ on the right side of them can be coverted to $R_{i; k}(0, 1)$ in $G/S$. The same is true in the case of $s_{i; k}=0$ and in fact, 
$R_{i; k}(s_{i; k}, j\omega_{i; k})$ with $s_{i; k}=0$ can be converted to $R_{i; k}(0,  j)$ as an element of $G/S$. 

If we let 
\begin{equation*}
\begin{split}
R(r_{i; k}z_{i; k}, u_{i; k})&=\textstyle\prod_{k=0}^{4n-1}R_k(r_{i; k}z_{i; k}, u_{i; k}), \\
R(s_{i; k}z_{i; k}, jv_{i; k})&=\textstyle\prod_{k=0}^{4n-1}R_k(s_{i; k}z_{i; k}, jv_{i; k}),\\
d(z_{1; k}, \ldots, z_{n_k; k})&
=\textstyle\prod_{k=0}^{4n-1}d(z_{1; k})\cdots d(z_{n_k; k}), 
\end{split}
\end{equation*} 
then observing the procedure used to prove the equations above we finally find that the following equation holds. 
\begin{lem} 
\begin{equation*}
\begin{split}
&R(r_{i; k}x_{i; k}, u_{i; k})R(s_{i; k}y_{i; k}, jv_{i; k})
d(x_{1; k}, \ldots, x_{n_k; k})d(y_{1; k}, \ldots, y_{n_k; k})\\
&=R(r_{i; k}, u_{i; k}m(x_{\le i; k})x^i_{i; k})
R(s_{i; k}, jv_{i; k}m(x_{\le i; n_k})m(y_{\le i; k})\bar{y}^i_{i; k})\quad 
(x_{i; k}, y_{i; k} \in S^1)
\end{split}
\end{equation*}
where $m(z_{\le i; k})$ denotes a monomial of 
$z_{1; k}, \ldots, z_{i; k}$ and where if $r_{i; k}$ and $s_{i; k}$ are zero, then the 
product components $R_{i; k}(r_{i; k}, w_{i; k})$ and $R_{i; k}(s_{i; k}, j\omega_{i; k})$ on the right side can be converted to $R_{i; k}(0, 1)$ and $R_{i; k}(0, j)$, respectively.
\end{lem}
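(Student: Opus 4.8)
The plan is to obtain Lemma 2.1 as the global form of the four per-$k$ identities displayed just before it, by the same device used to prove those: iterate the commutation relations (2.5) so as to slide every elementary block-diagonal factor $d(\cdot)$ into a position where it is absorbed into an $R_{i;k}$-block through (2.1) and (2.4). Thus I would take for granted, for each $k$ with $0\le k\le 4n-1$, the two identities already established just above the statement, which express $R_k(r_{i;k}z_i,u_{i;k})\,d(z_1)\cdots d(z_{n_k})$ and its $s$-analogue as products $\prod_i R_{i;k}(r_{i;k},\,u_{i;k}\cdot(\mathrm{monomial}))$, resp.\ $\prod_i R_{i;k}(s_{i;k},\,jv_{i;k}\cdot(\mathrm{monomial}))$; the case $k=0$ carries the extra factors $(z_1\cdots z_{i-1})^{4n+1}$, resp.\ $(\bar z_1\cdots\bar z_{i-1})^{8n}$, which are themselves monomials of the kind $m(z_{\le i;k})$ permitted in the statement. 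The content of the lemma is then to run the same reduction for the big product $R(r_{i;k}x_{i;k},u_{i;k})\,R(s_{i;k}y_{i;k},jv_{i;k})$, in which $k$ now ranges over all of $0,\ldots,4n-1$ and both families occur at once, multiplied on the right by $d(x_{1;k},\ldots,x_{n_k;k})\,d(y_{1;k},\ldots,y_{n_k;k})$.

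The reduction I have in mind proceeds by pushing the block-diagonal factor $d(x_{1;k},\ldots,x_{n_k;k})$ leftwards through the whole $s$-part, and then through the $r$-part, one elementary factor $d(\cdot)$ at a time, and likewise pushing $d(y_{1;k},\ldots,y_{n_k;k})$ leftwards through the $s$-part. By the relations (2.5), each such move either leaves an $R_{i;k'}$-block untouched or replaces its off-diagonal entry $u_{i;k'}$ (resp.\ $jv_{i;k'}$) by the same entry times a monomial in the diagonal variables, the exponent being dictated by which family it belongs to and by whether $k'=0$. After these shuffles the $x$-factors sit immediately to the left of the $r$-part and the $y$-factors immediately to the left of the $s$-part, so that --- bringing, for each level $k$, the $d(\cdot)$'s of that level into final position past the blocks $R_{i;k'}$ with $k'\ne k$, again by (2.5) --- one may apply the per-$k$ identities recalled above. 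Collecting all the monomials accumulated along the way then turns $u_{i;k}$ into $u_{i;k}\,m(x_{\le i;k})\,x_{i;k}^{\,i}$ and $jv_{i;k}$ into $jv_{i;k}\,m(x_{\le i;n_k})\,m(y_{\le i;k})\,\bar y_{i;k}^{\,i}$, the monomial $m(x_{\le i;n_k})$ being exactly the net contribution of commuting the $x$-factors through the $s$-part; this is the asserted right-hand side. The degenerate components are disposed of just as in the paragraph preceding the lemma: when $r_{i;k}=0$ one has $|u_{i;k}|=1$, so the diagonal variable entering the monomial on $u_{i;k}$ may be chosen to make that monomial equal to $1$, whence $R_{i;k}(0,w_{i;k})$ becomes $R_{i;k}(0,1)$ in $G/S$; symmetrically $R_{i;k}(0,j\omega_{i;k})$ becomes $R_{i;k}(0,j)$ when $s_{i;k}=0$.

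The main obstacle is entirely the bookkeeping: one must track, through the double product over $k$ and over $i$ and through the interleaving of the $r$- and $s$-families, precisely which power of which variable is picked up at every application of (2.5), and then check that these powers assemble into exactly the monomials $m(x_{\le i;k})$, $m(x_{\le i;n_k})$, $m(y_{\le i;k})$ named in the statement and into the clean exponents $x_{i;k}^{\,i}$ and $\bar y_{i;k}^{\,i}$. There is no further conceptual difficulty --- every individual step is one of the relations (2.1)--(2.5), and once an order for moving the elementary factors $d(\cdot)$ is fixed, the verification is a lengthy but routine induction on the pair $(k,i)$.
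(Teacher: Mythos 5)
Your proposal is correct and follows essentially the same route as the paper, which gives no separate argument for Lemma 2.1 beyond ``observing the procedure used to prove the equations above'': namely, iterating the commutation relations (2.5) to slide the $d(\cdot)$ factors through the $s$- and $r$-blocks, absorbing them via (2.1) and (2.4) into the per-$k$ identities, and handling the $r_{i;k}=0$, $s_{i;k}=0$ components exactly as in the paragraph preceding the lemma. The only content beyond those displayed identities is the bookkeeping of the accumulated monomials, which you identify correctly.
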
 
Putting $(S^2)^\mathit{l}=S^2\times\overset{(l)}\cdots\times S^2$ we define 
a map $\phi : (S^2)^{16n^2+4n} \to G/S$ by
\[
(x_0, \ldots, x_{4n-1}, y_0, \ldots, y_{4n-1})\to 
p(R(r_{i; k}, w_{i; k})R(s_{i; k}, j\omega_{i; k}))
\] 
where $x_k=((r_{1; k}, w_{1; k})_R, \ldots, (r_{n_k; k}, w_{n_k; k})_R)$ and 
$y_k=((s_{1; k}, j\omega_{1; k})_R, \ldots, (s_{n_k; k}, j\omega_{n_k; k})_R)$. 
Then taking into acount Lemma 2.1 we see that
\[P=\{R(r_{i; k}x_{i; k}, u_{i; k})R(s_{i; k}y_{i; k}, jv_{i; k}) | (r_{i; j}, u_{i; j})_R, 
(s_{i; k}, jv_{i; k})_R\in S^2, x_{i; k}, y_{i; k}\in S^1\}\subset G\] 
forms the total space of a principal $S$-bundle endowed with the projection map $q : P\to (S^2)^{16n^2+4n}$ such that $\phi\circ q=p|P$. By putting 
$L^{\boxtimes l}=L\boxtimes\overset{(l)}\cdots\boxtimes L$ we therefore have 
\begin{cor} The induced bundle $\phi^*E$ is isomorphic to $L^{\boxtimes{(16n^2+4n)}}$.  
\end{cor}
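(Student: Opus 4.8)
The plan is to recognise the principal $S$-bundle underlying $\phi^{*}E$ explicitly and then pin down its isomorphism class by a computation of first Chern classes. First I would note that $\phi^{*}E$ is the complex line bundle associated to the pulled-back principal bundle $\phi^{*}p$, and that $\phi^{*}p$ is canonically identified with $q\colon P\to (S^{2})^{16n^{2}+4n}$ itself. Indeed the inclusion $P\hookrightarrow G$ covers $\phi$ since $\phi\circ q=p|_{P}$, it is injective, and it is $S$-equivariant; on each $q$-fibre it is an injective $S$-equivariant map from the $S$-torsor $P_{b}$ to the $S$-torsor $p^{-1}(\phi(b))$, hence a bijection. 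Thus $P\cong\phi^{*}G$ as principal $S$-bundles and $\phi^{*}E\cong P\times_{S}\mathbb{C}$, so the corollary is the statement that the line bundle associated to $q$ is $L^{\boxtimes (16n^{2}+4n)}$.

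Next, write $m=16n^{2}+4n$. Since $(S^{2})^{m}$ is a finite CW complex, complex line bundles over it are classified by $c_{1}\in H^{2}((S^{2})^{m};\mathbb{Z})$, and by the K\"unneth theorem this group is free abelian on $x_{1},\dots,x_{m}$, where $x_{j}$ is the pull-back of the generator of $H^{2}(S^{2})$ along the $j$-th projection. The restriction maps $\iota_{j}^{*}\colon H^{2}((S^{2})^{m})\to H^{2}(S^{2})$ along the $m$ coordinate $2$-spheres $\iota_{j}\colon S^{2}\hookrightarrow (S^{2})^{m}$ (all other coordinates set equal to the base point) satisfy $\iota_{j}^{*}x_{l}=\delta_{jl}\cdot(\text{generator})$, so the family $(\iota_{j}^{*})_{j}$ is jointly injective; moreover $\iota_{j}^{*}(L^{\boxtimes m})\cong L$ for each $j$, because $\mathrm{pr}_{l}\circ\iota_{j}$ is the identity for $l=j$ and constant otherwise. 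Hence it suffices to show $\iota_{j}^{*}(\phi^{*}E)\cong L$ for every $j$: then $c_{1}(\phi^{*}E)$ and $c_{1}(L^{\boxtimes m})$ have equal image under each $\iota_{j}^{*}$, hence coincide.

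It then remains to compute $\phi\circ\iota_{j}$. The $j$-th coordinate of $(S^{2})^{m}$ is one of the $(r_{i;k},w_{i;k})_{R}$ or one of the $(s_{i;k},j\omega_{i;k})_{R}$; along $\iota_{j}$ every other coordinate sits at its base point, where $r_{l;k}=1$ (resp.\ $s_{l;k}=1$) forces the accompanying vector entry to vanish, so by the identities recorded just after (2.4) each matrix factor except the $(i;k)$-th one collapses to $I_{4n+1}$ once the $d$- and $D_{k}$-twists (and the auxiliary $\mathrm{diag}(1,\dots,1,\bar{z}^{2},\dots,\bar{z}^{2})$ in the $s$-case) are absorbed, while in Lemma 2.1 the monomials $m(x_{\le i;k})$ and $m(x_{\le i;n_{k}})m(y_{\le i;k})$ specialise to $1$. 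What is left is exactly the single-block map $S^{2}\to Sp(2)/S\hookrightarrow G/S$ of Section 2 whose associated principal $S$-bundle was declared isomorphic to the Hopf bundle $p\colon S^{3}\to S^{2}$ with associated line bundle $L$; hence $(\phi\circ\iota_{j})^{*}E\cong L$ and the corollary follows. The main obstacle is precisely this last reduction: the bookkeeping needed to confirm that holding the remaining factors at their base points trivialises them modulo $S$, and that the residual twists — the powers $z^{i}$, the monomials $m(x_{\le i;k})$, the corrections $D_{k}(z)$ and $\mathrm{diag}(1,\dots,1,\bar{z}^{2},\dots,\bar{z}^{2})$ built into $R^{\{i\}}_{k}$, and the switch between the two charts $(r,uz)_{R}$ and $(s,jv\bar{z})_{R}$ — really leave a single Hopf bundle rather than some power $L^{\otimes d}$.

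Once this is in hand, everything else is formal. Alternatively one can bypass cohomology altogether by reading Lemma 2.1 directly as the assertion that, modulo $S$, a point of $P$ is the fibrewise Baer-sum product of its $m$ Hopf-sphere components, which by definition is the principal $S$-bundle of $L^{\boxtimes m}$; this gives an explicit isomorphism $P\cong L^{\boxtimes m}$ of principal $S$-bundles and hence $\phi^{*}E\cong L^{\boxtimes m}$, at the cost of making the same monomial-twist bookkeeping visible in the isomorphism itself rather than in a Chern-class computation.
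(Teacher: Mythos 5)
Your overall strategy is reasonable and in fact close to what the paper implicitly does: the paper offers no argument for the corollary beyond the construction of $q:P\to (S^2)^{16n^2+4n}$ with $\phi\circ q=p|P$, and your steps (identify $\phi^*E$ with the line bundle associated to $q$, then pin down the class by restricting to the coordinate spheres, using that $c_1$ and the K\"unneth decomposition of $H^2((S^2)^m;\mathbb{Z})$ classify line bundles over $(S^2)^m$) are a legitimate way to make that assertion checkable. But your proposal stops exactly where the actual mathematical content of the corollary lies. The claim $\iota_j^*(\phi^*E)\cong L$ for every coordinate sphere is not a bookkeeping formality that can be deferred: by (2.4) one has $R^{\{i\}}_k(r_{i;k}z,u_{i;k})\,d(z)=R_{i;k}(r_{i;k},u_{i;k}z^{i})$, so the circle action twists the coordinate $u_{i;k}$ with weight $i$ (indeed $R^{\{i\}}_k(rz,u)d(\lambda)=R^{\{i\}}_k(rz\bar\lambda,u\lambda^{i})$), and the normal-form sphere coordinate is $u_{i;k}m(x_{\le i;k})x_{i;k}^{i}$, with further monomial twists, the corrections $D_k(z)$ and $\mathrm{diag}(1,\ldots,1,\bar z^2,\ldots,\bar z^2)$, and complex conjugations in the $j$-type blocks. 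It is precisely here that one must rule out that the restriction is $L^{\otimes\pm i}$ (or the conjugate bundle $\bar L$ on some factors, which would alter the sign of the top pairing used in the proof of (i)). You name this as ``the main obstacle'' and then do not resolve it; since everything else in your argument is soft, the proposal as written is a reduction of the corollary to its only nontrivial point, not a proof of it.

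To close the gap you would have to carry out the single-factor computation explicitly: for each index $(i;k)$ (separately for the $R$-type and the $j$-type blocks, and separately for $k=0$ and $k\ge 1$, since the powers of $z$ entering through $d(z)$ and $D_k(z)$ differ), exhibit either an explicit $S$-equivariant bundle map from the piece of $P$ lying over the $j$-th coordinate sphere to the Hopf bundle $S^3\to S^2$ compatible with the identification $(r,w)_R=(1-2r^2,2rw)$, or an explicit clutching-function/degree computation showing the Euler number is $+1$ with the orientation fixed by $L$. This is exactly the kind of verification the paper itself supplies for the torus factors in Lemma 3.1 (the map $\tilde\mu_l:P_l\to SU(2)$), and the corollary needs its analogue for every sphere factor; in carrying it out you should also make explicit how the base-point convention ($w_{i;k}$ converted to $1$ when $r_{i;k}=0$) is used, since that is where the weight-$i$ twisting and the passage from the solid-torus piece $\{R^{\{i\}}_k(r_{i;k}z,u_{i;k})\}$ to a bundle over the whole $2$-sphere must be reconciled. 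Your alternative ``Baer-sum'' reading of Lemma 2.1 faces the same issue and does not avoid it.
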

Let $(S^2)^\circ$ be the subspace of $S^2$ consisting of $(a, Z)_R$ with $a>0$ and let $((S^2)^\mathit{l})^\circ$ be the direct product of $l$ copies of 
$(S^2)^\circ$. Then we have
\begin{lem} The restriction of 
$\phi$ to $((S^2)^{16n^2+4n})^\circ$ is an injective map.
\end{lem}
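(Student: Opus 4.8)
The plan is to reduce the statement to a matrix equation and then recover the $S^2$-coordinates one ``layer'' $k$ at a time, exactly along the lines of the $SU(2n)$ argument in~\cite{M}. Suppose $\phi$ takes the same value at two points of $((S^2)^{16n^2+4n})^\circ$, with associated elements $M=R(r_{i;k},w_{i;k})R(s_{i;k},j\omega_{i;k})$ and $M'=R(r'_{i;k},w'_{i;k})R(s'_{i;k},j\omega'_{i;k})$ of $G$, so that $p(M)=p(M')$. Since $S$ acts on $G$ on the right and $S=\{d(z):z\in S^1\}$, this says $M'=Md(z)$ for a unique $z\in S^1$, with $d(z)=\mathrm{diag}(\bar z^{4n},z,\ldots,z)$. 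Hence it is enough to show that $M$, together with the knowledge that it arises from parameters lying in the open hemispheres $(S^2)^\circ$, determines every $(r_{i;k},w_{i;k})_R$ and $(s_{i;k},j\omega_{i;k})_R$, and forces $z=1$.

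First I would organize the extraction over $k=0,1,\ldots,4n-1$. The structural point is that each $R_{i;k}(\,\cdot\,)$, hence each $R_k(r_{i;k},w_{i;k})$ and $R_k(s_{i;k},j\omega_{i;k})$, acts as the identity on the first $k$ coordinates; so after peeling off the layers $0,\ldots,k-1$ (by left-multiplying $M$ by the inverses of their $R$-factors) one is left with a matrix fixing the first $k$ coordinates, whose $(k+1)$-st row and column involve only the $n_k=4n-k$ parameters of layer $k$. Within a layer the explicit ``staircase'' forms computed above --- the displayed product matrices together with~(2.2)--(2.5) and their $b$-analogues --- allow one to read off, from a corner entry, the product $r_{1;k}\cdots r_{n_k;k}\,s_{1;k}\cdots s_{n_k;k}$ as a nonnegative real, and then successively the individual $w_{l;k},\omega_{l;k},r_{l;k},s_{l;k}$ working inward. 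Once layer $k$ has been recovered one restricts to the lower-right $(4n+1-k)\times(4n+1-k)$ corner, which is again a situation of the same shape, and induction completes the extraction.

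The hypothesis $a>0$, i.e.\ $0\le r_{i;k},s_{i;k}<1/\sqrt2$, is built into this at two places. It makes the passage from the nonnegative real $1-2r^2$ back to $r$ a bijection, so there is no sign or modulus ambiguity in the $r$'s and $s$'s, and it forces $|w_{i;k}|=\sqrt{1-r_{i;k}^2}>0$ and $|\omega_{i;k}|>0$, so that each $w_{i;k}$ and $\omega_{i;k}$ has a well-defined phase; it also guarantees that the degenerate elements $R_{i;k}(0,1)$ and $R_{i;k}(0,j)$ which occur when some $r_{i;k}$ or $s_{i;k}$ vanishes (by the conventions recorded just before Lemma~2.1) are read off unambiguously and never coincide with the image of an interior point. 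As for $z$: right multiplication by $d(z)$ scales the first column of $M$ by $\bar z^{4n}$ and the remaining columns by $z$, so $M$ and $M'$ have identical column moduli; hence all of the $r$'s and $s$'s, and all the moduli of the $w$'s and $\omega$'s, agree with their primed counterparts at once and independently of $z$, while the phases are determined only up to powers of $z$. Matching the remaining entries then pins $z$ down and forces all the phases to agree as well, so the two parameter tuples coincide; and then $M'=M$ gives $d(z)=I$, i.e.\ $z=1$.

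I expect the one genuinely demanding step to be combinatorial rather than conceptual: carrying out the staircase extraction rigorously, in particular keeping precise track of how consecutive layers interact inside the product $R(r_{i;k},w_{i;k})R(s_{i;k},j\omega_{i;k})$ and of exactly which power of $z$ decorates each entry. This is essentially the bookkeeping already performed in~\cite{M} for $SU(2n)$, here with the extra symplectic factor $R(s_{i;k},j\omega_{i;k})$ to be carried along; alternatively, the step dealing with $z$ can be phrased by observing that passing from $M$ to $Md(z)$ changes the parameters exactly by the substitution that appears in Lemma~2.1, which is injective for the same positivity reason.
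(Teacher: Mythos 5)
Your overall strategy is the one the paper actually uses for Lemma 2.3: exploit the staircase form of the layer-$k$ factors (the first column of each $\mathrm{diag}(I_k,M_k)$ block), recover the parameters of layer $k$ by working inward from the corner entry and dividing by positive quantities, cancel that layer from both sides, and after exhausting the $x$-layers reduce to the analogous extraction for the $y$-variables. However, two of your steps have genuine gaps. First, you have misread the domain: in the paper $(S^2)^\circ$ means the points $(a,Z)_R$ with $a>0$ in the $(\,\cdot\,,\cdot\,)_R$ coordinates, i.e.\ $r_{i;k}>0$ and $s_{i;k}>0$, and this is precisely what the proof consumes at every stage (``since $s_{1;0}\cdots s_{4n;0}>0$'', ``since $r_{4n;0}=r'_{4n;0}>0$'', and so on). Your reading $0\le r_{i;k},s_{i;k}<1/\sqrt{2}$ admits $r_{i;k}=0$, where the divisions in your own extraction fail and the degenerate conventions $R_{i;k}(0,1)$, $R_{i;k}(0,j)$ come into play; the remark that such points ``are read off unambiguously'' does not repair this. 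The strict positivity you weakened is exactly the engine of the argument.

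Second, your treatment of the coset ambiguity is only asserted. You are right that $p(M)=p(M')$ gives a priori only $M'=Md(z)$ (the paper passes immediately to the matrix equality $(\ast)$ without comment), but the two claims you rest on are not proved: equality of entry moduli does not give all the $r$'s, $s$'s, $|w|$'s and $|\omega|$'s ``at once'' --- one must rerun the staircase on the moduli, which again needs $r_{i;k},s_{i;k}>0$ --- and ``matching the remaining entries then pins $z$ down'' is precisely the nontrivial point. To eliminate $z$ one has to use the entries that are positive reals: the first-column comparison yields $\bar z^{4n}\cdot(\text{positive real})=(\text{positive real})$ with both sides of modulus issues controlled, forcing $z^{4n}=1$, and entries in the later columns (which are scaled by $z$), such as the diagonal terms $\underline{b}_{l,l;k}=s_{l;k}>0$, are then needed to force $z=1$ itself. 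As written, your proposal matches the paper's skeleton but leaves unproved exactly the steps where the positivity hypothesis and the $d(z)$ twist interact.
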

\begin{proof} Letting $x=(x_0, \ldots, x_{4n-1})$ and $y=(y_0, \ldots, y_{4n-1})$,  
suppose $\phi(x, y)=\phi(x', y')$, namely $p(R(r_{i; k}, w_{i; k})R(s_{i; k}, j\omega_{i; k}))
=p(R(r'_{i; k}, w'_{i; k})R(s'_{i; k}, j\omega'_{i; k}))$ where we denote by attaching
``\,$'$\," to an element accompanied by $x$ its corresponding element accompanied by $x'$. It is easily seen that this equation can be interpreted as meaning that 
\begin{equation*}\tag{$\ast$}
\begin{split}
&R_0(r_{i; 0}, w_{i; 0})\cdots R_{4n-1}(r_{i; 4n-1}, w_{i; 4n-1})
R_0(s_{i; 0}, j\omega_{i; 0})\cdots R_{4n-1}(s_{i; 4n-1}, j\omega_{i; 4n-1})\\
&=R_0(r'_{i; 0}, w'_{i; 0})\cdots R_{4n-1}(r'_{i; 4n-1}, w'_{i; 4n-1})
R_0(s'_{i; 0}, j\omega'_{i; 0})\cdots R_{4n-1}(s'_{i; 4n-1}, j\omega'_{i; 4n-1})
\end{split}
\end{equation*}
where in this case $r_{i; l}, s_{i; l}>0$ and $r'_{i; l}, s'_{i; l}>0$. 
From (2.2) and (2.3) we know that the product components $R_k(r_{i; k}, w_{i; k})$ and $R_k(s_{i; k}, j\omega_{i; k})$ in the left hand side of $(*)$ are of the form $\mathrm{diag}(I_k, M_k)$, respectively, 
and that the first column of $M_k$ consists of  
\begin{equation*}
\begin{split}
&\underline{a}_{0, 0; k}= r_{1; k}\cdots r_{n_k; k}, \  
\underline{a}_{l, 0; k}= -r_{l+1; k}\cdots r_{n_k; k}\bar{w}_{l; k}  
\quad (1\le l\le n_k-1), \\
&\underline{a}_{n_k, 0; k}= -\bar{w}_{n_k; k}\\
&\underline{b}_{l, l; k}=s_{i; k}\quad  (1\le l\le n_k); \ \text{and}\\
&\underline{b}_{0, 0; k}=s_{1; k}\cdots s_{n_k; k}, \  
\underline{b}_{l, 0: k}=s_{l+1; k}\cdots s_{n_k; k}\omega_{l; k} \quad (1\le l\le n_k-1), \ 
\\
&\underline{b}_{n_k, 0; k}=\omega_{n_k; k},\\
&\underline{b}_{0, 1; k}=\omega_{1; k}, \ 
\underline{b}_{0, l; k}
=s_{1; k}\cdots s_{l-1; k}\omega_{l; k}
\quad (2\le \mathit{l}\le n_k),
\end{split}
\end{equation*}
respectively. The same holds true for those in the right hand side.  

Based on these relations we first prove $x_0=x'_0$. In view of the form of 
$R_k(r_{i; k}, w_{i; k})$ and  $R_k(s_{i; k}, j\omega_{i; k})$ in the case $k=0$ we have 
\begin{equation*}
\underline{a}_{0, 0; 0}\underline{b}_{0, 0; 0}
=\underline{a}'_{0, 0; 0}\underline{b}'_{0, 0; 0}, \ldots, \underline{a}_{l, 0; 0}\underline{b}_{0, 0; 0}
=\underline{a}'_{l, 0; 0}\underline{b}'_{0, 0; 0}, \ldots, 
\underline{a}_{4n, 0; 0}\underline{b}'_{0, 0; 0}
=\underline{a}'_{4n, 0; 0}\underline{b}'_{0, 0; 0}.
\end{equation*}
Here  
$|\underline{a}_{0, 0; 0}|^2+\cdots+|\underline{a}_{4n, 0; 0}|^2=1$ and 
$|\underline{a}'_{0, 0; 0}|^2+\cdots+|\underline{a}'_{4n, 0; 0}|^2=1$. 
Hence we know that $|\underline{b}_{0, 0; 0k}|=|\underline{b}'_{0, 0; 0}|$, i.e. $s_{1; 0}\cdots s_{4n; 0}=s'_{1; 0}\cdots s'_{4n; 0}$. Now since $s_{1; 0}\cdots s_{4n; 0}>0$ we have
\begin{equation*}
\underline{a}_{0, 0; 0}=\underline{a}'_{0, 0; 0}, \ldots, \underline{a}_{l, 0; 0}=\underline{a}'_{l, 0; 0}, \ldots, 
\underline{a}_{4n, 0; 0}=\underline{a}'_{4n, 0; 0}.
\end{equation*}
From the last equation $\underline{a}_{4n, 0; 0}=\underline{a}'_{4n, 0; 0}$ of this  we have $w_{4n; 0}=w'_{4n; 0}$ and so it follows that $r_{4n; 0}=r'_{4n; 0}$. Substituting this into the second equation $\underline{a}_{4n-1, 0; 0}=\underline{a}'_{4n-1, 0; 0}$ 
we have $r_{4n; 0}w_{4n-1; 0}=r'_{4n; 0}w'_{4n-1; 0}$. Here since 
$r_{4n; 0}=r'_{4n; 0}> 0$, $w_{4n-1; 0}=w'_{4n-1; 0}$ and so it also follows that
$r_{4n-1; 0}=r'_{4n-1; 0}$. Repeating this procedure we subsequently obtain $w_{4n-2; 0}=w'_{4n-2; 0}$, $r_{4n-2; 0}=r'_{4n-2; 0}$;   
$w_{4n-3; 0}=w'_{4n-3; 0}$, $r_{4n-3; 0}=r'_{4n-3; 0}$; $\cdots$ ; 
$w_{2; 0}=w'_{2; 0}$, $r_{2; 0}=r'_{2; 0}$ and finally arrive at 
$w_{1; 0}=w'_{1; 0}$, $r_{1; 0}=r'_{1; 0}$. This allows us to conclude that $x_0=x'_0$. 

From the definition it follows that the equation $x_0=x'_0$ is equivalent to 
$R_0(r_{i; 0}, w_{i; 0})=R_0(r'_{i; 0}, w'_{i; 0})$. So we know that these factors can be canceled from both sides of ($\ast$) by multiplying their inverse elements (matrices) and that therefore it can be rewritten as
\begin{equation*}
\begin{split}
&R_1(r_{i; 1}, w_{i; 1})\cdots R_{4n-1}(r_{i; 4n-1}, w_{i; 4n-1})
R_0(s_{i; 0}, j\omega_{i; 0})\cdots R_{4n-1}(s_{i; 4n-1}, j\omega_{i; 4n-1})\\
&=R_1(r'_{i; 1}, w'_{i; 1})\cdots R_{4n-1}(r'_{i; 4n-1}, w'_{i; 4n-1})
R_0(s'_{i; 0}, j\omega'_{i; 0})\cdots R_{4n-1}(s'_{i; 4n-1}, j\omega'_{i; 4n-1}).
\end{split}
\end{equation*}
Similarly by applying the above argument to this equation we have \begin{equation*}
\underline{a}_{0, 0; 1}\underline{b}_{1, 1; 0}
=\underline{a}'_{0, 0; 1}\underline{b}'_{1, 1; 0}, \ldots, \underline{a}_{l, 0; 1}\underline{b}_{1, 1; 0}
=\underline{a}'_{l, 0; 1}\underline{b}'_{1, 1; 0}, \ldots, 
\underline{a}_{4n, 0; 1}\underline{b}'_{1, 1; 0}
=\underline{a}'_{4n, 0; 1}\underline{b}'_{1, 1; 0}.
\end{equation*}
From this in a similar way to the case $k=0$ above we can get $x_1=x'_1$ or equivalently 
$R_1(r_{i; 1}, w_{i; 1})=R_1(r'_{i; 1}, w'_{i; 1})$. Hence the rewritten equation 
above can also be rewritten as
\begin{equation*}
\begin{split}
&R_2(r_{i; 2}, w_{i; 2})\cdots R_{4n-1}(r_{i; 4n-1}, w_{i; 4n-1})
R_0(s_{i; 0}, j\omega_{i; 0})\cdots R_{4n-1}(s_{i; 4n-1}, j\omega_{i; 4n-1})\\
&=R_2(r'_{i; 1}, w'_{i; 2})\cdots R_{4n-1}(r'_{i; 4n-1}, w'_{i; 4n-1})
R_0(s'_{i; 0}, j\omega'_{i; 0})\cdots R_{4n-1}(s'_{i; 4n-1}, j\omega'_{i; 4n-1}).
\end{split}
\end{equation*}
This leads us to the result that $x_2=x'_2$ and 
$R_2(r_{i; 2}, w_{i; 2})=R_2(r'_{i; 2}, w'_{i; 2})$. Repeating this procedure successibly we can be led to $x_3=x'_3$, $\ldots$, $x_{4n-1}=x'_{4n-1}$ via inductive reduction, which prove $x=x'$. This also asserts that ($\ast$) can be reduced to
\begin{equation*}\tag{$\ast\ast$}
\begin{split}
R_0(s_{i; 0}, j\omega_{i; 0})\cdots &R_{4n-1}(s_{i; 4n-1}, j\omega_{i; 4n-1})\\
&=R_0(s'_{i; 0}, j\omega'_{i; 0})\cdots R_{4n-1}(s'_{i; 4n-1}, j\omega'_{i; 4n-1}).
\end{split}
\end{equation*}

By going through the process similar to when proving $x=x'$ above 
based on this equation we obtain $y=y'$ and thereby we can conclude that 
$(x, y)=(x', y')$. This completes the proof of the lemma.
\end{proof}

\section{Proof of  $\mathrm{(i)}$}

Let us put
\[D_l(\tau_{2l-1}, z\tau_{2l})=\mathrm{diag}(1, \overset{(2l-2)}\ldots, 1, \tau_{2l-1}, 
\bar{\tau}_{2l-1}z\tau_{2l}, \bar{z}\bar{\tau}_{2l}, 1, \ldots, 1)\qquad 
(1\le l\le 2n)\] 
where $\tau_{2l-1}, \tau_{2l}, z\in S^1$. 
Let $P_l$ be the subspace of $G$ consisting of 
$D_l(\tau_{2l-1}, z\tau_{2l})d(\bar{z})$. Then it forms the total space of a principal $S$-bundle over $T_l=S^1\times S^1$ along with the projection map of $p_l : P_l\to T_l$ given by $D_l(\tau_{2l-1}, z\tau_{2l})d(\bar{z})\to
 (\tau_{2l-1}, z\tau_{2l})$ where $T_l$ is considered as an 
subspace of $G/S$ under $\iota_l : (\tau_{2l-1}, z\tau_{2l})\to 
p(D_l(\tau_{2l-1}, z\tau_{2l}))$ which is clearly injective. 

Let $\tau_{2l-1}=e^{\eta i}$, $\tau_{2l}=e^{\theta i}$ for  
$0\le \eta, \, \theta < 2\pi$ and let $\mu_l : T_l\to S^2$ be the map given by 
\begin{equation*}
(e^{\eta i}, ze^{\theta i})\to\left\{
\begin{array}{ll}
(\cos(\eta/2),  ze^{\theta i}\sin(\eta/2))_R & \ (0\le \eta\le \pi)\\
(-\cos(\eta/2),  ze^{\theta t_\eta i}\sin(\eta/2))_R & \ (\pi\le \eta< 2\pi), \ \ \ 
t_\eta=2-\eta/\pi.
\end{array}
\right.
\end{equation*}
Then taking into account the fact that a principal circle bundle over $S^1$ is 
trivial we see that the classifying map of $p_l$ factors through $S^2$ where the restriction of $p_l$ to $\{1\}\times S^1\subset T_l$ is viewed as being trivial. 
Therefore we have 

\begin{lem}[cf.\, ~\cite{LS}, \!\S2, Example 3] 
$p_l : P_l\to T_l$ is isomorphic to the induced bundle of the complex 
Hopf bundle $p : SU(2)\to S^2$ by $\mu_l$ and $\mu_l$ also induces an isomorphism 
$H^2(S^2, \mathbb{Z})\cong H^2(T_l, \mathbb{Z})$ for $1\le l\le 2n$.  
\end{lem}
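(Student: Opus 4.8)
The plan is to follow the two–step scheme of ~\cite{LS}, \S2, Example 3: first identify the principal $S$–bundle $p_l$ with the pullback $\mu_l^*p$ of the complex Hopf bundle, and then show that $\mu_l$ has degree $\pm1$; the latter yields at once both that $p_l$ is the asserted induced bundle up to isomorphism and that $\mu_l^*\colon H^2(S^2;\mathbb{Z})\to H^2(T_l;\mathbb{Z})$ is an isomorphism (both groups being infinite cyclic).

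For the identification I would exhibit an $S$–equivariant map $\tilde\mu_l\colon P_l\to SU(2)$ covering $\mu_l$; by the universal property of the pullback of a principal bundle this forces $p_l\cong\mu_l^*p$. With $\tau_{2l-1}=e^{\eta i}$ and $\tau_{2l}=e^{\theta i}$, on the arc $0\le\eta\le\pi$ the natural candidate is
\[
\tilde\mu_l\bigl(D_l(\tau_{2l-1},z\tau_{2l})d(\bar z)\bigr)=R\!\left(\cos(\eta/2)\,z,\ \sin(\eta/2)\,\tau_{2l}\right)\in SU(2)\subset Sp(2),
\]
and on $\pi\le\eta<2\pi$ one takes the continuation dictated by the second branch of $\mu_l$. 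Three things then have to be verified: (a) $\tilde\mu_l$ is continuous — the two pieces must agree at $\eta=\pi$ and close up across $\eta\equiv 0\bmod 2\pi$; (b) $\tilde\mu_l$ covers $\mu_l$ — composing with the Hopf projection $R(w,u)\mapsto(1-2|w|^2,\,2uw)$ should return $\mu_l(\tau_{2l-1},z\tau_{2l})$, which is a direct comparison with the definition of $\mu_l$ together with the identification $(r,uz)_R\leftrightarrow(1-2r^2,2ruz)$ of \S2; and (c) $\tilde\mu_l$ is $S$–equivariant — right translation by $d(\zeta)\in S$ sends $D_l(\tau_{2l-1},z\tau_{2l})d(\bar z)$ to the element with parameters $(\tau_{2l-1},\zeta\tau_{2l},\bar\zeta z)$ while it acts on $Sp(2)$ by $R(w,u)\mapsto R(w\bar\zeta,u\zeta)$, and the two match. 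Alternatively one can argue as the text indicates: the restriction of $p_l$ to each coordinate circle of $T_l$ is a circle bundle over $S^1$, hence trivial, so the classifying map of $p_l$ factors up to homotopy through the collapse $T_l\to T_l/(S^1\vee S^1)=S^2$, and $\mu_l$ realizes this collapse.

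For the degree, observe that on the open band $0<\eta<\pi$ the assignment $(\tau_{2l-1},z\tau_{2l})\mapsto(\cos(\eta/2),\,z\tau_{2l}\sin(\eta/2))_R$ is a homeomorphism onto the complement in $S^2$ of the two points $\pm1$, to which $\eta=0$ and $\eta=\pi$ are collapsed; hence a regular value of $\mu_l$ has a single preimage and, after checking local orientations there, $\deg\mu_l=\pm1$, so $\mu_l^*$ is an isomorphism on $H^2$. (Equivalently, once $p_l\cong\mu_l^*p$ is known it is enough to see that $p_l$ has Euler number $\pm1$.) The step I expect to be the real work is the arc $\pi\le\eta<2\pi$ in (a) and (c): there the second branch of $\mu_l$ uses the non-integral reparametrization $\theta\mapsto\theta t_\eta$, $t_\eta=2-\eta/\pi$, of the $\theta$–circle, and this is exactly the place where triviality of a circle bundle over $S^1$ is being invoked, to adjust the local trivialization of $p_l$ over the second semicircle of the $\eta$–direction so that it glues to the one over the first with clutching concentrated near $\eta=\pi$; making $\tilde\mu_l$ (equivalently $\mu_l$) genuinely well defined, continuous and $S$–equivariant there, and confirming that the resulting degree is $\pm1$, is the crux.
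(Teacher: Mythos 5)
Your proposal follows essentially the same route as the paper: its proof likewise consists of writing down an explicit bundle map $\tilde{\mu}_l : P_l \to SU(2)$ covering $\mu_l$ (given there by $D_l(e^{\eta i}, ze^{\theta i})d(\bar z)\mapsto R(\bar z\cos(\eta/2), e^{\theta i}\sin(\eta/2))$ for $0\le\eta\le\pi$ and the $t_\eta$-twisted analogue on $\pi\le\eta<2\pi$, i.e.\ your candidate up to the conjugation in the first slot) and invoking the pullback property, with the $H^2$ statement declared immediate from the definition of $\mu_l$. The continuity/equivariance checks on the second branch and the degree-one argument that you spell out (or flag as the crux) are precisely the details the paper leaves implicit, so your plan does not diverge from its argument.
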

\begin{proof}
In order to prove the first equation it suffices to show that there is 
a bundle map covering $\mu_l$. In fact we see that based on the above assumption  the assignment   
\begin{equation*}
D_l(e^{\eta i}, ze^{\theta i})d(\bar{z})\to\left\{
\begin{array}{ll}
R(\bar{z}\cos(\eta/2),  e^{\theta i}\sin(\eta/2)) & \ (0\le \eta\le \pi)\\
R(-\bar{z}\cos(\eta/2),  e^{\theta t_\eta i}\sin(\eta/2)) & \ (\pi\le\eta<2\pi), \ \ \ t_\eta=2-\eta/\pi
\end{array}
\right.
\end{equation*}
defines the desired bundle map $\tilde{\mu}_l : P_l\to SU(2)$. The second equation is immediate from the definition of $\mu_l$.
\end{proof}

Letting \[
D(\tau_{2l-1}, z_l\tau_{2l})=\textstyle\prod_{l=1}^{2n}
D_l(\tau_{2l-1}, z_l\tau_{2l})\quad\text{and}\quad 
T^{2n}=\textstyle\prod_{l=1}^{2n}T_l
\] 
we define $\iota_{2n} : T^{2n}\to G/S$ to be the map given by 
$\lambda=(\lambda_1, \ldots, \lambda_{2n})\to 
p(D(\tau_{2l-1}, z_l\tau_{2l}))$ where $\lambda_l=(\tau_{2l-1}, z_l\tau_{2l})$. Then  taking into account the analysis performed for the proof of Corollary 2.2 we have 
from Lemma 3.1
\begin{equation}
\iota_{2n}^*E\cong L^{\boxtimes{(2n)}}.
\end{equation}
 
Furthermore, put 
\[D_l(s_l, jv_l)=\mathrm{diag}(1, \overset{(l-1)}\ldots, 1, s_l+jv_l, 1, \ldots, 1)\qquad (1\le l\le 4n)\] 
where $s_l+jv_l\in Sp(1)$ with $s_l\ge 0$, $v_l\in \mathbb{C}$. Similarly 
to the above case let $P_l$ be the subspace of $G$ consisting of 
$D_l(s_l, jv_l)d(\bar{z})$. Then it forms the total space of a principal $S$-bundle over $S^2$ along with the projection map of $p_l : P_l\to S^2$ given by 
$D_l(s_l, jv_l)d(\bar{z})\to (s_l,  jv_l)_R$. But in this case it is clear that  
$p_l : P_l\to S^2$ is isomorphic to $p : SU(2)\to S^2$ as a principal 
$S$-bundle over $S^2$. Let $\iota_{4n+1} : (S^2)^{4n+1}\to G/S$ be 
the map given by $\xi=(\xi_1, \ldots, \xi_{4n+1})\to p(D(s_l, jv_l))$ where  
$\xi_l=(s_l,  jv_l)_R$ and $D(s_l, jv_l)=\textstyle\prod_{l=1}^{4n+1}
D_l(s_l, jv_l)$. Then we have 
\begin{equation}
\iota_{4n+1}^*E\cong L^{\boxtimes{(4n+1)}}.
\end{equation} 

In the notation above, let 
\[\psi : (S^2)^{16n^2+4n}\times T^{2n}\times (S^2)^{4n+1}\to G/S\] be the map given by 
\[(x, \lambda, \xi, y)\to p(R(r_{i; k}, w_{i; k})D(\tau_{2n-1}, z_l\tau_{2l})
D(s_l, jv_l)R(s_{i; k}, j\omega_{i; k})).\]

Let $(T_l)^\circ=T_l-\{1\}\times S^1$ and 
$(T^{2n})^\circ=\textstyle\prod_{l=1}^{2n}(T_l)^\circ$. Then we have 
\begin{lem}
The restriction of $\psi$ to $((S^2)^{16n^2+4n})^\circ\times (T^{2n})^\circ
\times ((S^2)^{4n+1})^\circ$ is an injective map.
\end{lem}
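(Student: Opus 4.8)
The plan is to follow the scheme of Lemma 2.3, treating the two central factors $D(\tau):=D(\tau_{2l-1},z_l\tau_{2l})$ and $D(s):=D(s_l,jv_l)$ as additional blocks to be peeled off in turn. First, exactly as in the proof of Lemma 2.3, one lifts the equality $\psi(x,\lambda,\xi,y)=\psi(x',\lambda',\xi',y')$ in $G/S$ to an identity of matrices over $\mathbb H$
\begin{equation*}
R(r_{i;k},w_{i;k})\,D(\tau)\,D(s)\,R(s_{i;k},j\omega_{i;k})
=R(r'_{i;k},w'_{i;k})\,D(\tau')\,D(s')\,R(s'_{i;k},j\omega'_{i;k}),\tag{$\sharp$}
\end{equation*}
the right-$S$ coset being pinned down, just as in Lemma 2.3, because the matrix entry that is forced there to be a positive real stays a positive real after the unitary insertions $D(\tau)$, $D(s)$.

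Next I would peel factors off $(\sharp)$ in the stages $x=x'$, $y=y'$, $\xi=\xi'$, $\lambda=\lambda'$, using throughout the block shapes exhibited in (2.2)--(2.5): each $R_k(r_{i;k},w_{i;k})$ and $R_k(s_{i;k},j\omega_{i;k})$ is of the form $\mathrm{diag}(I_k,M_k)$ with $M_k$ of arrow-plus-upper-triangular type, while $D(\tau)$ is diagonal with unit complex entries and $D(s)$ is diagonal with entries $s_l+jv_l\in Sp(1)$. For $x=x'$ I would run the induction of Lemma 2.3 over the blocks $k=0,1,\dots,4n-1$, comparing for each $l$ the $\mathbb C$-part (the component free of $j$) of the appropriate first column; since $D(\tau)$ and $D(s)$ merely rescale rows by unit quaternions, the positivity $r_{i;k}>0$, $s_{i;k}>0$, $s_l>0$ still lets the norm-sum-and-cancel argument recover successively $\bar w_{n_k;k},r_{n_k;k}$, then $\bar w_{n_k-1;k},r_{n_k-1;k}$, and so on, whence $R(r,w)=R(r',w')$. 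Cancelling this from the left and running the mirror-image induction from the right-hand end gives $R(s,j\omega)=R(s',j\omega')$, i.e. $y=y'$, and reduces $(\sharp)$ to $D(\tau)D(s)=D(\tau')D(s')$. Both sides are now diagonal with $l$-th entry $c_l s_l+j\,\bar c_l v_l$, where $c_l$ is the $l$-th diagonal entry of $D(\tau)$; equating $\mathbb C$-parts and $j$-parts and using $s_l,s'_l>0$ forces $s_l=s'_l$, then $c_l=c'_l$, then $v_l=v'_l$, so that $\xi=\xi'$ and $D(\tau)=D(\tau')$. Finally, reading the diagonal of $D(\tau)=\prod_{l=1}^{2n}D_l(\tau_{2l-1},z_l\tau_{2l})$ and forming the successive partial products of its entries recovers $\tau_{2l-1}$ and $z_l\tau_{2l}$ separately for each $l$, hence $\lambda_l=(\tau_{2l-1},z_l\tau_{2l})=\lambda'_l$ (the $\circ$-condition $\tau_{2l-1}\neq1$ of $(T_l)^\circ$ being among the hypotheses in force); so $\lambda=\lambda'$. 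Thus $(x,\lambda,\xi,y)=(x',\lambda',\xi',y')$.

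I expect the first stage to be the one genuine obstacle. In Lemma 2.3 the two $R$-factors abut, so the $R(s,j\omega)$-part contributes to the $\mathbb C$-part of the first column only the positive scalar $\underline b_{0,0;0}=s_{1;0}\cdots s_{n_0;0}$; with $D(\tau)D(s)$ interposed, the $jv_l$-part of $D(s)$ feeds extra terms into that $\mathbb C$-part and the naive induction no longer closes. To get around this I would peel $x$ together with $\lambda$ from the left and $y$ together with $\xi$ from the right --- handling the central diagonal block synchronously with the outer $R$-factors --- so that no uncancelled central factor ever sits between the two $R$-factors whose first column one is reading; alternatively one carries both the $\mathbb C$- and the $j$-component of that first column along the induction and checks that the $\circ$-positivity still forces the required cancellations. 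The other three stages are then routine transcriptions of the arguments already given for $\phi$, $\iota_{2n}$ and $\iota_{4n+1}$ in \S2 and \S3.
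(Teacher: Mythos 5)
You have correctly identified both the right overall scheme (lift the equality in $G/S$ to a matrix identity and peel off blocks by first-column comparisons, using the positivity coming from the $\circ$-condition) and, importantly, the crux: with $D(\tau)D(s)$ interposed, the $j$-parts $jv_i$ of $D(s)$ feed the $j$-entries of $R(s_{i;k},j\omega_{i;k})$ back into the $\mathbb{C}$-part of the first column, so your first stage ``prove $x=x'$ by reading only the $\mathbb{C}$-part'' does not close. But the proposal stops exactly there: you name the obstacle and offer two unexecuted fixes, so the decisive step of the lemma is missing. Moreover, the grouping in your first fix (recover $\xi$ together with $y$ ``from the right'') is misaligned with where the parameters actually sit: the $(1,1)$ entries of $D(\tau)$ and $D(s)$ are $\tau_1$ and $s_1+jv_1$, so the $\xi$- and $\lambda$-data are visible from the \emph{left} at the very first step, and neither Lemma 2.3 nor anything else in the paper sets up a ``mirror-image'' right-end induction for $y$ (none is needed).

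What the paper does is not to separate the stages $x$, $y$, $\xi$, $\lambda$ at all. At the step $k=0$, $l=1$, because $\underline{b}_{0,0;0}=s_{1;0}\cdots s_{4n;0}>0$, one gets the relation $\tau_1(s_1+jv_1)\underline{a}_{l,0;0}=\tau'_1(s'_1+jv'_1)\underline{a}'_{l,0;0}$ for $0\le l\le 4n$; splitting into $\mathbb{C}$- and $j$-parts gives $\tau_1 s_1\underline{a}_{l,0;0}=\tau'_1 s'_1\underline{a}'_{l,0;0}$ and $\tau_1 v_1\underline{a}_{l,0;0}=\tau'_1 v'_1\underline{a}'_{l,0;0}$; the normalization $\sum_l|\underline{a}_{l,0;0}|^2=1$ forces $s_1=s'_1$, the $l=0$ component together with $r_{i;0},r'_{i;0}>0$ forces $\tau_1=\tau'_1$, then $v_1=v'_1$, and finally $\underline{a}_{l,0;0}=\underline{a}'_{l,0;0}$, whence $x_0=x'_0$ by the cascade of Lemma 2.3. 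Iterating this \emph{simultaneous} extraction over $k$ and $l$ yields $(x,\lambda,\xi)=(x',\lambda',\xi')$ in one left-to-right sweep, after which the equation collapses to $(\ast\ast)$ of Lemma 2.3 and $y=y'$ follows as there. So your second fallback (carrying both the $\mathbb{C}$- and the $j$-component of the first column through the induction) is the right idea, but the interleaved recovery of $\tau_1,s_1,v_1$ alongside $x_0$ is precisely the content of the proof and has to be carried out; it is not a routine transcription of the arguments for $\phi$, $\iota_{2n}$ and $\iota_{4n+1}$.
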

\begin{proof}
Suppose $\psi(x, \lambda, \xi, y)=\psi(x', \lambda', \xi', y')$ 
in terms of the notations used in the proof of Lemma 2.3.
Then by definition it can be viewed as the equation 
\begin{equation*}\tag{$\ast$}
\begin{split}
R(r_{i; k}, w_{i; k})&D(\tau_{2l-1}, z_l\tau_{2l})D(s_l, jv_l)
R(s_{i; k}, j\omega_{i; k})\\
&=R(r'_{i; k}, w'_{i; k})D(\tau'_{2l-1}, z_l\tau'_{2l})
D(s'_l, jv'_l)R(s'_{i; k}, j\omega'_{i; k})
\end{split}
\end{equation*}
with $r_{i; k}, s_l, s_{i; k}>0$ and $r'_{i; k}, s'_l, s'_{i; k}>0$.

From this we want to derive $x=x'$, $\lambda=\lambda'$ 
$\xi=\xi'$ and $y=y'$; that is, $r_{i; k}=r'_{i; k}$, $w_{i; k}=w'_{i; k}$, 
$s_{i; k}=s'_{i; k}$, $\omega_{i; k}=\omega'_{i; k}$, $s_l=s'_l$, $v_l=v'_l$ 
and $\tau_{2l-1}=\tau'_{2l-1}$, $\tau_{2l}=\tau'_{2l}$. 
However under this assumption we find that if the proof is proceeded along the same lines as in the proof of the injectivity of $\phi$ in Lemma 2.3, then in order to prove this lemma it suffices to show that these equalities hold 
only in the case where $k=0$, $l=1$.

Now looking back the case $k=0$ in the proof of Lemma 2.3 and following the process mentioned there we know that with the notaion used there 
\[\tau_1(s_1+jv_1)\underline{a}_{l, 0; 0}
=\tau'_1(s'_1+jv'_1)\underline{a}'_{l, 0; 0}\qquad 
(1\le l\le 4n)\]
holds because $s_{1; 0}\cdots s_{4n; 0}>0$; that is, 
$\tau_1s_1\underline{a}_{l, 0; 0}=\tau'_1s'_1\underline{a}'_{l, 0; 0}$ and 
$\tau_1v_1\underline{a}_{l, 0; 0}=\tau'_1v'_1\underline{a}'_{l, 0; 0}$ 
$(1\le l\le 4n)$. 
First, using the former equation together with 
$|\underline{a}_{1, 0; 0}|^2+\cdots+|\underline{a}_{4n, 0; 0}|^2=1$, 
$|\underline{a}'_{1, 0; 0}|^2+\cdots+|\underline{a}'_{4n, 0; 0}|^2=1$
and $|\tau_1|=|\tau_1'|=1$ we have $s_1=s'_1$. Substituting this into the former with $l=0$ we have 
$\tau_1(r_{1; 0}\cdots r_{4n; 0})=\tau'_1(r'_{1; 0}\cdots r'_{4n; 0})$. 
Now since $r_{l; 0}r_{l; 0}'> 0$ for all $l$ we have $\tau_1=\tau'_1$.
Applying this to the latter equation we have $v_1=v'_1$ along the same lines. Thus we have  
\[\tau_1=\tau'_1, s_1=s'_1, v_1=v'_1 \quad \text{and} \quad 
\underline{a}_{l, 0; 0}=\underline{a}'_{l, 0; 0} \ \ \ (1\le l\le 4n-1).\]
Repeating this procedure we are led to  
\[(x, \lambda, \xi)=(x', \lambda', \xi')\]
and consequently the equation $(\ast)$ above can be reduced to 
\[R(s_{i; k}, j\omega_{i; k})=R(s'_{i; k}, j\omega'_{i; k}).\]
This equation is the same as $(\ast\ast)$ in the proof of Lemma 2.3 and so 
it of course leads us to the result $y=y'$, which completes the proof of the lemma.
\end{proof}
\begin{proof}[Proof of $\mathrm{(i)}$]
Put $B= (S^2)^{(16n^2+4n)}\times T^{2n}\times (S^2)^{4n+1}$. Then clearly 
$\dim B=\dim G/S$. From this fact and the injectivity result of $\psi$ given in Lemma 3.2 by taking into account the construction (cf.\!~\cite{H}) we have that 
$\psi$ can be continuously deformed into an onto map of degree one from $B$ to 
$G/S$. Let $[M]$ denote the fundamental class of a manifold $M$. Then  
$\psi_*([B])=[G/S]$, so we have
\[
\langle c_1(E)^{16n^2+10n+1}, \, [G/S]\rangle
=\langle (c_1(\psi_*E)^{16n^2+10n+1}), \, [B]\rangle. 
\]
Hence by Corollary 2.2, (3.1) and (3.2)  
\begin{equation*}
\begin{split} 
&\langle c_1(E)^{16n^2+10n+1}, \, [G/S]\rangle
=\langle c_1(L^{\boxtimes(16n^2+10n+1)}, \, [B] \rangle\\
&=\langle  c_1(L^{\boxtimes(16n^2+4n)}), \, [(S^2)^{(16n^2+4n)}] \rangle\,
\langle c_1(L^{\boxtimes(2n)}, \, [T^{2n}]\rangle\,
\langle  c_1(L^{\boxtimes(4n+1)}), \, [(S^2)^{(4n+1)}\rangle]
\end{split}
\end{equation*}
Substituting this into the equation of Proposition 2.1 of ~\cite{LS} we obtain
\begin{equation*}
e_{\mathbb C}([S(E), \Phi_E])=(-1)^nB_{8n^2+5n+1}/2(8n^2+5n+1)
\end{equation*}
where $\Phi_E$ denotes  the trivialization of the stable tangent space of $S(E)$ derived by the framing $\mathscr{L}_S$ on $G/S$ induced by $\mathscr{L}$ 
due to it being $S$-equivariant (cf.\!~\cite{K}, p.\,42; ~\cite{LS}, p.\,36). 

Here by overlaying $\Phi_E$ over $\mathscr{L}$ we find that it is trivialized in 
$2n$ parts on $G$. This is due to adding the terms 
$D(\tau_{2l-1}, z\tau_{2l})D(s_{l'}, jv_{l'})$ $(1\le l\le 2n)$ in the definition formula of 
$\psi$ to that of $\phi$. But since 
$D(\tau_{2l-1}, z\tau_{2l})=R_{1; 2l-1}(\tau_{2l-1}, 0)R_{1; 2l}(z\tau_{2l}, 0)$ we see that in order to remove this discrepancy it suffices to replace $\mathscr{L}$ by the framing 
$\mathscr{L}^{2n\rho}$ obtained by twisting $\mathscr{L}$ by $2n\rho$ where 
$\rho$ denotes the canonical real representation 
$Sp(4n+1)\to GL(16n+1, \mathbb{R})$. Thus we have the desired result. This 
completes the proof of (i).
\end{proof}

\section{Decomposition of $E$ in the spin case}

In order to prove (ii) we follow the lines of the proof of (i).

Let $\{e_1, \ldots, e_N\}$ be an orthonormal basis of $\mathbb{R}^N$, i.e. 
a basis subject to the relations $e_i^2=-1$ and $e_ie_j=-e_je_i$ for $i\ne j$. Let $Cl_N$ denote the Clifford algebra generated by $e_1, \ldots, e_N$ and 
$Cl_N^0\subset Cl_N$ the even part, the subalgebra generated by the linear combinations of the even degree elements.  Then $Spin(N)$ is defined to be the 
multiplicative subgroup of $Cl_N^0$ where $Cl_N^0$ is isomorphic to 
the even part $\Lambda^\mathrm{ev}(\mathbb{R}^N)$ of the exterior algebra 
of $\mathbb{R}^N$ as a vector space.

Let $G=Spin(8n-2)$. For $1\le l\le 4n-2$ we put
\[
z_{l, l+1}(t)=\cos t+e_{l}e_{l+1}\sin t \quad \text{and}\quad
d(t)=\textstyle\prod_{l=1}^{4n-2}z_{2l, 2l+1}(t)
\] 
where $t\in\mathbb{R}$. It is clear that $z_{2l, 2l+1}(t)$ behaves as an 
element of the unit sphere $S^1$. The relation 
$z_{2l, 2l+1}(t)z_{2l, 2l+1}(t')=z_{2l, 2l+1}(t+t')$ $(t'\in\mathbb{R})$ holds, so as usual 
we write $|z_{2l, 2l+1}(t)|=(z_{2l, 2l+1}(t)z_{2l, 2l+1}(-t))^{1/2}$. In addition, clearly the 
commutative property $z_{2l, 2l+1}(t)z_{2l', 2l'+1}(t)=z_{2l', 2l'+1}(t)z_{2l, 2l+1}(t)$ holds for $l\ne l'$, hence it follows that $d(t)$ generates a circle subgroup of $G$, 
denoted by $S$. 

Let $d(t)$ ($t\in\mathbb{R})$ act on $G$ by right multiplication. Let $p : G\to G/S$ be the principal bundle along with the natural projection. Let $\pi : E=G\times_S\mathbb{C}\to G/S$ be the canonical line bundle over $G/S$ associated to $p$ where $S$ acts on $\mathbb{C}$ as $S^1$, i.e. $d(t)v=e^{ti}v$ for $v\in \mathbb{C}$. Then  its unit sphere bundle $S(E) \to G/S$ is isomorphic to $p$ as a principal $S$-bundle over $G/S$. From now we prove that $E$ can be expressed as an exterior tensor product of complex line bundles.

Let $1\le m\le 4n-1$ such that $m\ne l+1$. Put  
\[
x_{2m-1, 2l, 2l+1}(r, s; t, \theta)=rz_{2l, 2l+1}(t)+e_{2m-1}e_{2l}\,(sz_{2l, 2l+1}(\theta))
\] 
where $r, s$ are $\ge 0$ and satisfy $r^2+s^2=1$ and $t, \theta\in\mathbb{R}$. 
Then it is clear that the set consisting of $x_{2m-1, 2l, 2l+1}(r, s; t, \theta)$ forms 
a subgroup of $G$ isomorphic to $Sp(1)$. From the above we see that the 
following commutative property holds
\begin{equation}
x_{2m-1, 2l, 2l+1}(r, s; t, \theta)d(t')
=d(t')x_{2m-1, 2l, 2l+1}(r, s; t, \epsilon\theta) 
\end{equation}
for $t, t'\in \mathbb{R}$ where $\epsilon=-1$ if $m\ge 2$ and $\epsilon=1$ 
if $m=1$. 

For $1\le k\le 4n-2$ we define $x_m(r, s; t, \theta)^{\{k\}}$ to be
\[
\bigl(\textstyle\prod_{l=1}^{4n-2}x_{2m-1, 2l, 2l+1}(r, s; t, \theta)\bigr) 
\ \text{with} \ r=1 \ \text{except} \ l=k; \ \text{namely} 
\]
\[
\bigl(\textstyle\prod_{l=1}^{k-1}z_{2l, 2l+1}(t)\bigr)
\bigl(r_kz_{2k, 2k+1}(t)+e_{2m-1}e_{2k}\,(s_kz_{2k, 2k+1}(\theta))\bigr)
\bigl(\textstyle\prod_{l=k+1}^{4n-2}z_{2l, 2l+1}(t)\bigr).
\]
Then 
\begin{equation}
\begin{split}
x_m(r, s; t, \theta)^{\{k\}}d(-t)&=r+e_{2m-1}e_{2k}\,(sz_{2k, 2k+1}(\theta-t))\\ 
&=x_{2m-1, 2k, 2k+1}(r, s; 0, \theta-t). 
\end{split}
\end{equation}
Let
\[
x_m(r_l, s_l; t_l, \theta_l)=\textstyle\prod_{l=1}^{4n-2}x_m(r_l, s_l; t_l, \theta_l)^{\{l\}}
\]
where $r_l, s_l\ge 0$ and $t_l, \theta_l\in \mathbb{R}$. 
Then using (4.2) together with (4.1) we have
\begin{equation}
x_m(r_l, s_l; t_l, \theta_l)d(-t_1)\cdots d(-t_{4n-2})
=\textstyle\prod_{l=1}^{4n-2}x_{2m-1, 2l, 2l+1}(r_l, s_l; 0, \epsilon\theta_l-t_l)
\end{equation}
where $\epsilon=(-1)^{l-1}$ if $m\ge 2$ and $\epsilon=1$ if $m=1$.
Here if $r_k=0$ for some $k$, then since $s_k=1$ we see that it is possible to replace both variables $t_k$ and $\theta_k$ by $t_k-\epsilon\theta_k$ in the $k$-th component $x_m(r_k, s_k; t_k, \theta_k)^{\{k\}}$ of $x_m(r_l, s_l; t_l, \theta_l)$. 
From this we find that $x_{2m-1, 2k, 2k+1}(r_k, s_k; 0, \epsilon\theta_k-t_k)$ on the right side can be transformed into $e_{2m-1}e_{2k}$ by replacing the variable $t_k$ of $d(-t_k)$ by $t_k-\epsilon\theta_k$. This means that when we regard 
$x_m(r_k, s_k; t_k, \theta_k)^{\{k\}}d(-t_k)$ with $r_k=0$ as an element of $G/S$, it can be converted to $e_{2m-1}e_{2k}$. 

For fixed $m, k$, the elements $x_m(r, s; t, \theta)^{\{k\}}$ generate an $S$-invariant subspace $P_m^{\{k\}}\subset G$ isomorphic to $Sp(1)=S^3$, as 
mentioned above.
From (4.2) we see that its quotient $P_m^{\{k\}}/S$ consists of 
$x_{2m-1, 2k, 2k+1}(r, s; 0, \theta)$, that is, of 
$r+e_{2m-1}e_{2k}\,(s_kz_{2k, 2k+1}(\theta))$ and then the assignment
$r+e_{2m-1}e_{2k}\,(s_kz_{2k, 2k+1}(\theta))\to (1-2r^2, 2rse^{\theta i})$ induces 
a homeomorphism of the quotient above onto the unit 2-sphere $S^2$ of 
$\mathbb{R}\times\mathbb{C}$. Below similarly to above we write $(r, w)_R$ for $(1-2r^2, 2rw)$ with $w=se^{\theta i}$ using the notation above. Then from the observation  above we know that the variable $w$ must be converted to 1 when $r=0$, that is, $(r, w)_R=(0, 1)$. 

Let $\iota_m^{\{k\}} : S^2\to G/S$ be the map given by 
$(r_k, w_k)_R\to r_k+e_{2m-1}e_{2k}\,\omega_k$ where we﻿ write $w_k=s_ke^{\theta_k i}$ and $\omega_k=s_kz_{2k, 2k+1}(\theta_k)$.  Then the induced bundle $\iota_m^{\{k\}*}p$ becomes isomorphic to the Hopf bundle $S^3\to S^2$ and therefore we have 
\begin{equation}
\iota_m^{\{k\}*}E\cong L.
\end{equation}
Here again using the same symbol $L$ we denote the line bundle associated to 
$\iota_m^{\{k\}*}p$. 

Let $(S^2)^l=S^2\times\overset{(l)}\cdots\times S^2$ and 
$\phi_m : (S^2)^{4n-2}\to G/S$ be the map given by $x=(x_1, \ldots, x_{4n-2})\to 
p\bigl(\prod_{k=1}^{4n-2}(r_k+e_{2m-1}e_{2k}\omega_k)\bigr)$ where 
$x_k=(r_k, w_k)_R\in S^2$. 
\begin{lem} 
The restriction of $\phi_m$ to $((S^2)^{4n-2})^\circ$ is an injective map.
\end{lem}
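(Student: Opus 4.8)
The plan is to run the argument of Lemma 2.3 inside the Clifford algebra, exploiting the fact that the bivector planes attached to the factors of $\phi_m$ are pairwise orthogonal; this makes the conclusion much quicker than in the symplectic case. First I would assume $\phi_m(x)=\phi_m(x')$ with all $r_k,r'_k>0$ (this being the meaning of $((S^2)^{4n-2})^\circ$) and, exactly as in the proof of Lemma 2.3, reinterpret it as the honest identity
\[
\textstyle\prod_{k=1}^{4n-2}(r_k+e_{2m-1}e_{2k}\omega_k)=\prod_{k=1}^{4n-2}(r'_k+e_{2m-1}e_{2k}\omega'_k)
\]
in $G=Spin(8n-2)$; the a priori $S$-ambiguity $g'=g\,d(-t)$ is eliminated on $((S^2)^{4n-2})^\circ$ just as there, using $r_k>0$. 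Throughout, $\omega_k=s_kz_{2k,2k+1}(\theta_k)$, $w_k=s_ke^{\theta_k i}$ and $r_k^2+s_k^2=1$.

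Next I would expand the left-hand product in $Cl_{8n-2}$. Writing $\beta_k=e_{2k}\omega_k=s_k\cos\theta_k\,e_{2k}-s_k\sin\theta_k\,e_{2k+1}$, a vector of length $s_k$ in $\mathrm{span}(e_{2k},e_{2k+1})$, the $k$-th factor becomes $r_k+e_{2m-1}\beta_k$. By construction $e_{2m-1}$ is distinct from every $e_{2k},e_{2k+1}$ (since $m\ne k+1$) and the planes $\mathrm{span}(e_{2k},e_{2k+1})$ are mutually orthogonal, so $e_{2m-1}$ anticommutes with each $\beta_k$ and $\beta_k$ anticommutes with $\beta_{k'}$ for $k\ne k'$. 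Expanding, the term indexed by $K\subseteq\{1,\dots,4n-2\}$ is $\bigl(\prod_{l\notin K}r_l\bigr)$ times the ordered product of the $e_{2m-1}\beta_k$, $k\in K$, and a short computation with these (anti)commutation rules shows this ordered product equals $\prod_{k\in K}\beta_k$ (in increasing order) when $|K|$ is even and $e_{2m-1}\prod_{k\in K}\beta_k$ when $|K|$ is odd, with no residual sign. Reading off components: the scalar part of the product is $\prod_{l=1}^{4n-2}r_l$, and — since a subset $K$ contributes a degree-$2$ term involving $e_{2m-1}$ only when it is a singleton — the coefficients of $e_{2m-1}e_{2k}$ and $e_{2m-1}e_{2k+1}$ (both coming from $K=\{k\}$) are $\bigl(\prod_{l\ne k}r_l\bigr)s_k\cos\theta_k$ and $-\bigl(\prod_{l\ne k}r_l\bigr)s_k\sin\theta_k$; the same holds for the primed side.

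Finally I would compare. The scalar parts give $\prod_l r_l=\prod_l r'_l=:\Pi$, which is positive, and the two bivector coefficients give $\bigl(\prod_{l\ne k}r_l\bigr)w_k=\bigl(\prod_{l\ne k}r'_l\bigr)w'_k$ for each $k$ (under the evident $\mathbb{R}^2\cong\mathbb{C}$). Since $\prod_{l\ne k}r_l=\Pi/r_k>0$ and $\prod_{l\ne k}r'_l=\Pi/r'_k>0$, taking moduli yields $s_k/r_k=s'_k/r'_k$; together with $r_k^2+s_k^2=(r'_k)^2+(s'_k)^2=1$ and $r_k,r'_k>0$ this forces $r_k=r'_k$ and $s_k=s'_k$, whence $\prod_{l\ne k}r_l=\prod_{l\ne k}r'_l$ and therefore $w_k=w'_k$. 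Thus $x_k=(r_k,w_k)_R=(r'_k,w'_k)_R=x'_k$ for every $k$, i.e.\ $x=x'$. I expect the one genuinely delicate point to be the sign bookkeeping in the Clifford expansion of the middle step (together with, at the level of detail the paper keeps, the reinterpretation of $\phi_m(x)=\phi_m(x')$ as an equation in $G$, handled as in Lemma 2.3); once those are settled the conclusion drops out, and — unlike in Lemma 2.3 — no inductive stripping of $w_{4n-2},w_{4n-3},\dots$ is needed, precisely because orthogonality of the planes decouples the bivector coefficients.
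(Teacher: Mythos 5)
Your proposal is correct and follows essentially the same route as the paper: interpret $\phi_m(x)=\phi_m(x')$ as an identity in $Spin(8n-2)$, expand the product $\prod_k(r_k+e_{2m-1}e_{2k}\omega_k)$ in the even Clifford algebra, and compare the scalar coefficient with the coefficients of the $e_{2m-1}e_{2k}$-type bivectors, using $r_k,r'_k>0$ together with $r_k^2+s_k^2=1$. The only difference is organizational: the paper first deduces $x_1=x'_1$ and then cancels that factor, repeating inductively, whereas you read off all the singleton coefficients simultaneously (justified by the orthogonality of the planes), which is an inessential streamlining of the same argument.
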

\begin{proof}
Suppose $\phi_m(x)=\phi_m(x')$, where $x'=(x'_1, \ldots, x'_{4n-2})$ and 
$x'_k=(r'_k, w'_k)_e$. By definition this equation can be regarded as meaning
\[
\textstyle\prod_{k=1}^{4n-2}(r_k+e_{2m-1}e_{2k}\omega_k)
=\textstyle\prod_{k=1}^{4n-2}(r'_k+e_{2m-1}e_{2k}\omega'_k)
\]
with $r_k>0$ and $r'_k>0$. Using this we proceed by induction on $k$. Since
$r_i\ne 0$, i.e. $r_i>0$ for all $i$ we first have 
\[
r_1\cdots r_{4n-2}=r'_1\cdots r'_{4n-2} \quad \text{and} \quad 
r_2\cdots r_{4n-2}\omega_1=r'_2\cdots r'_{4n-2}\omega'_1.
\]
This together with $r_1^2+|\omega_1|^2={r'}_1^2+|\omega'_1|^2=1$ gives 
$(r_2\cdots r_{4n-2})^2=({r'}_2\cdots {r'}_{4n-2})^2$, so $r_1^2={r'}_1^2$, hence we have $r_1={r'}_1$, $r_2\cdots r_{4n-2}=r'_2\cdots r'_{4n-2}$ and $w_1=w'_1$. This shows that $x_1=x'_1$, and in addition allows us to replace the equation above by 
\[
\textstyle\prod_{k=2}^{4n-2}(r_k+e_{2m-1}e_{2k}\omega_k)
=\textstyle\prod_{k=2}^{4n-2}(r'_k+e_{2m-1}e_{2k}\omega'_k)
\]
with the same relation as given above. This tells us that by successively 
repeating the above procedue we can be led to the remaning results 
$x_2=x'_2, \ldots, x_{4n-2}=x'_{4n-2}$. This proves $x=x'$ and therefore the 
lemma.
\end{proof}

Let $\phi : (S^2)^{16n^2-12n+2} \to G/S$ be the map given by 
\[
x=(x_1, \ldots, x_{4n-1})\to 
p\bigl(\textstyle\prod_{m=1}^{4n-1}
\bigl(\textstyle\prod_{k=1}^{4n-2}(r_{k, m}+e_{2m-1}e_{2k}\omega_{k, m})\bigr)\bigr)
\] 
where $x_m=(x_{1, m}, \ldots, x_{4n-2, m})$ and 
$x_{k, m}=(r_{k, m}, w_{k, m})_R\in S^2$. 

\begin{lem}
The restriction of $\phi$ to $((S^2)^{16n^2-12n+2})^\circ$ is an injective map.
\end{lem}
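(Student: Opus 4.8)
The plan is to mirror the proof of Lemma 2.3, using Lemma 4.1 as the ``one block'' case and peeling off the $4n-1$ products $A_m:=\prod_{k=1}^{4n-2}(r_{k,m}+e_{2m-1}e_{2k}\omega_{k,m})$, $1\le m\le 4n-1$, one at a time. First I would note that on $((S^2)^{16n^2-12n+2})^\circ$ every $r_{k,m}$ is strictly positive, so no ``$(0,1)_R$''-conversion intervenes in the definition of $\phi$, and, exactly as in Lemma 2.3, the equality $\phi(x)=\phi(x')$ may be read as the honest identity in $G=Spin(8n-2)$
\[
A_1A_2\cdots A_{4n-1}=A_1'A_2'\cdots A_{4n-1}',\qquad r_{k,m},\ r_{k,m}'>0 .
\]

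The heart of the matter is then to show $A_1=A_1'$, i.e.\ $x_1=x_1'$. The device is that the generator $e_1$ occurs only inside the factors of $A_1$ (for $m\ge2$ the index $2m-1$ is $\ge3$, and the $\omega_{k,m}$ involve only $e_{2k},e_{2k+1}$ with $k\ge1$), while $e_1$ commutes with every even element of the subalgebra generated by $e_2,\dots,e_{8n-3}$, hence with $B:=A_2\cdots A_{4n-1}$ and with $B':=A_2'\cdots A_{4n-1}'$. Writing $A_1=P_1+e_1R_1$ with $P_1$ in the even, and $R_1$ in the odd, part of that subalgebra, the displayed identity splits into $P_1B=P_1'B'$ and $R_1B=R_1'B'$; combining these with the relations $A_1\overline{A_1}=1=B\overline{B}$ one pins $A_1$ down up to exactly the data that Lemma 4.1 controls. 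Concretely, comparing the coefficients of $1$, of $e_1e_2$ and of $e_1e_3$ on the two sides, and dividing out the (strictly positive, by positivity of the $r$'s) scalar contribution coming from $A_2\cdots A_{4n-1}$, reproduces the relations $r_{1,1}\cdots r_{4n-2,1}=r_{1,1}'\cdots r_{4n-2,1}'$ and $r_{2,1}\cdots r_{4n-2,1}\,\omega_{1,1}=r_{2,1}'\cdots r_{4n-2,1}'\,\omega_{1,1}'$, whence the induction on $k$ in the proof of Lemma 4.1 gives $x_1=x_1'$.

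Once $A_1=A_1'$ is known it can be cancelled from the left, and the identity becomes $A_2\cdots A_{4n-1}=A_2'\cdots A_{4n-1}'$, of the same shape with $m$ shifted by one; now $e_3$ plays the role that $e_1$ played before, and the same coefficient extraction together with Lemma 4.1 yields $x_2=x_2'$. Iterating $4n-1$ times gives $x_3=x_3',\dots,x_{4n-1}=x_{4n-1}'$, hence $x=x'$, which is the claim.

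The step I expect to be the genuine obstacle is the one isolating $A_1$ inside the product $A_1A_2\cdots A_{4n-1}$. In the symplectic setting the $R_k$ are honest block matrices $\mathrm{diag}(I_k,M_k)$ and the ``first column times a positive scalar'' bookkeeping is transparent; here one works inside the Clifford algebra, the blocks $A_m$ all use the generators $e_2,\dots,e_{8n-3}$, and the relevant coefficients of the product a priori pick up contraction terms between $R_1$ and $B$. Verifying that on the open part these contractions do not spoil the reduction to the Lemma 4.1 relations after dividing by the positive scalar part of $A_2\cdots A_{4n-1}$, and that cancelling $A_1$ leaves an identity of exactly the same form with index shifted, is where the real care lies; everything afterwards is the bookkeeping already carried out in Lemmas 2.3 and 4.1.
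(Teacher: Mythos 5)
Your overall plan---peel off the blocks $A_m=\prod_{k=1}^{4n-2}(r_{k,m}+e_{2m-1}e_{2k}\omega_{k,m})$ one at a time and feed each step into the induction of Lemma 4.1---is in the same spirit as the paper, whose proof of this lemma is literally ``quite similar to the previous lemma,'' i.e.\ the same coefficient-by-coefficient peeling as in Lemmas 4.1 and 2.3. But your specific implementation has a genuine gap, in two places. First, the iteration device is wrong: after cancelling $A_1$ you claim that ``$e_3$ plays the role that $e_1$ played before.'' It does not. The generator $e_1$ is indeed private to $A_1$, but $e_3=e_{2k+1}$ with $k=1$ occurs in \emph{every} block, since the $k=1$ factor of $A_m$ is $r_{1,m}+s_{1,m}\bigl(\cos\theta_{1,m}\,e_{2m-1}e_2-\sin\theta_{1,m}\,e_{2m-1}e_3\bigr)$; the same is true of every odd generator $e_{2k+1}$, $k\ge 1$. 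So once $A_1$ is removed there is no generator belonging only to $A_2$, and the ``split off the private generator'' argument cannot be repeated; the induction you announce simply does not run as described.

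Second, even the first step is not closed. You correctly identify that the coefficients of $1$, $e_1e_2$, $e_1e_3$ in $A_1B$ (with $B=A_2\cdots A_{4n-1}$) pick up contraction terms, and you leave their control as ``where the real care lies''---but that \emph{is} the lemma. For instance the scalar part of $B$ is not just $\prod_{m\ge 2,k}r_{k,m}$ (products of four bivector-type terms from different blocks can be scalar), and the coefficient of $e_1e_2$ receives contributions such as (coefficient of $e_1e_3$ in $A_1$)$\,\times\,$(coefficient of $e_3e_2$ in $B$), so ``dividing out the positive scalar contribution of $B$'' does not by itself reproduce the relations $r_{1,1}\cdots r_{4n-2,1}=r'_{1,1}\cdots r'_{4n-2,1}$ and $r_{2,1}\cdots r_{4n-2,1}\,\omega_{1,1}=r'_{2,1}\cdots r'_{4n-2,1}\,\omega'_{1,1}$ that Lemma 4.1 needs. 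So as written the proposal neither establishes $x_1=x'_1$ nor provides a mechanism for the subsequent blocks; to repair it you must either organize the comparison around monomials whose coefficients you can actually compute in the full product (as the paper implicitly does in extending Lemma 4.1), or find a cancellation argument replacing the ``private generator'' trick for $m\ge 2$.
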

\begin{proof}
The proof is quite similar to the previous lemma.
\end{proof}
From this taking into account (4.3) and the isomorphism of (4.4) we obtain  
\begin{cor}
$\phi^*E\cong L^{\boxtimes(16n^2-12n+2)}$.
\end{cor}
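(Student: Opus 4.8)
The plan is to argue with first Chern classes rather than building the bundle by hand. Write $X=(S^2)^{16n^2-12n+2}$ and, for each index pair $(m,k)$ occurring in the product that defines $\phi$ (so $1\le m\le 4n-1$ and $1\le k\le 4n-2$), let $\mathrm{pr}_{m,k}\colon X\to S^2$ be the corresponding projection and put $a_{m,k}=\mathrm{pr}_{m,k}^*c_1(L)$. By the K\"unneth theorem $H^2(X;\mathbb{Z})$ is free abelian on the classes $a_{m,k}$, and by the definition of the exterior tensor product $c_1\bigl(L^{\boxtimes(16n^2-12n+2)}\bigr)=\sum_{m,k}a_{m,k}$. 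Since a complex line bundle over the finite CW complex $X$ is determined by its first Chern class, it is enough to verify that $\phi^*c_1(E)=\sum_{m,k}a_{m,k}$.

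For this I would restrict to the single spherical factors. The restriction homomorphisms along the inclusions $S^2\hookrightarrow X$ that place a fixed base point in all coordinates but one detect $H^2(X;\mathbb{Z})$, so it suffices to identify the restriction of $\phi^*c_1(E)$ to each factor. Taking $(1,0)_R$ as the base point of $S^2$, which has $r=1$, $\omega=0$ and hence corresponds to the identity $1+e_{2m'-1}e_{2k'}\cdot 0=1$ of the Clifford algebra, one sees that with all coordinates but the $(m,k)$-th one at this base point the product $\prod_{m'}\prod_{k'}\bigl(r_{k',m'}+e_{2m'-1}e_{2k'}\omega_{k',m'}\bigr)$ collapses to the single factor $r_{k,m}+e_{2m-1}e_{2k}\omega_{k,m}$; thus the restriction of $\phi$ to the $(m,k)$-th factor is exactly $\iota_m^{\{k\}}$. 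Hence that restriction of $\phi^*c_1(E)$ equals $\iota_m^{\{k\}*}c_1(E)=c_1\bigl(\iota_m^{\{k\}*}E\bigr)=c_1(L)$ by (4.4); as this is also the restriction of $\sum_{m',k'}a_{m',k'}$ to the $(m,k)$-th factor and these restrictions (over all factors) detect $H^2(X;\mathbb{Z})$, we conclude $\phi^*c_1(E)=\sum_{m,k}a_{m,k}$ and the corollary follows.

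An alternative, matching the treatment of Corollary 2.2, is to exhibit the bundle directly: let $P\subset G$ be the set of all products $\prod_{m=1}^{4n-1}x_m(r_{l,m},s_{l,m};t_{l,m},\theta_{l,m})$ as the parameters range over all admissible values, with $S$ acting by right multiplication. Iterating (4.1) and (4.3) one should check that $P$ is $S$-invariant, that recording the normalised sphere coordinates of the factors yields a projection $q\colon P\to X$ making $P$ a principal $S$-bundle with $\phi\circ q=p|P$, and that $P$ is visibly the exterior product of the $16n^2-12n+2$ Hopf $S$-bundles $\iota_m^{\{k\}*}p$; passing to the associated line bundles and using (4.4) again then gives $\phi^*E\cong L^{\boxtimes(16n^2-12n+2)}$.

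Both routes are in the end bookkeeping, so the honest answer is that there is no serious obstacle. The one point that wants attention is the sign $\epsilon=\pm1$ appearing in (4.1) and (4.3): it plays no role in the Chern-class argument, since the restriction to a single factor uses the identity element as base point and involves no conjugation by $S$, but in the direct construction it must be tracked carefully while absorbing the parameters $t_{l,m}$ simultaneously across the whole product, in order to see that $q$ is well defined and continuous — this is the analogue of Lemma 2.1. The injectivity of $\phi$ on $((S^2)^{16n^2-12n+2})^\circ$ established in Lemma 4.2 is not needed for this corollary; it enters only afterwards, in the degree-one deformation used to prove (ii).
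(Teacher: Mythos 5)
Your primary argument is correct, but it is a genuinely different route from the paper's. The paper obtains the corollary exactly as it obtained Corollary 2.2: the commutation relations (4.1)--(4.2) and the product formula (4.3) are used to exhibit the set of products $\prod_m\prod_k x_m(r_{k,m},s_{k,m};t_{k,m},\theta_{k,m})^{\{k\}}$ as the total space of a principal $S$-bundle over $(S^2)^{16n^2-12n+2}$ lying over $\phi$ (with the degenerate points $r_{k,m}=0$ normalised as described after (4.3)), and the exterior tensor decomposition is then read off factor by factor from (4.4) --- this is precisely your ``alternative'' sketch, including the need to track the sign $\epsilon$ while absorbing the circle parameters across the whole product. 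Your main proof instead classifies line bundles over the finite complex $(S^2)^{16n^2-12n+2}$ by $c_1$, notes via K\"unneth that the restrictions to the single sphere factors detect $H^2$, observes that with the base point $(1,0)_R$ (i.e.\ $r=1$, $\omega=0$, giving the Clifford identity) in all other slots $\phi$ restricts on the $(m,k)$-th factor literally to $\iota_m^{\{k\}}$, and then uses (4.4) to identify $\phi^*c_1(E)$ with $c_1\bigl(L^{\boxtimes(16n^2-12n+2)}\bigr)$; all of these steps check out. What the cohomological route buys is exactly what you say: it bypasses the $\epsilon$-sign bookkeeping and the explicit construction of a bundle map, at the cost of invoking the $c_1$-classification; the paper's route keeps an explicit principal-bundle picture parallel to Section 2, which is the pattern reused for $\psi$ in Section 5. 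Your remark that the injectivity statement of Lemma 4.2 is not needed for the corollary (only for the degree-one argument in the proof of (ii)) is also accurate, despite the paper's ``From this'' phrasing.
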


\section{Proof of  $\mathrm{(ii)}$}

Put
\begin{equation*}
\begin{split}
D_l(\eta_l, t_l, \theta_l)&=z_{4l-2, 4l}(\eta_l)z_{4l-2, 4l-1}(t_l)
z_{4l-2, 4l-1}(\theta_l)\\
&=z_{4l-2, 4l}(\eta_l)z_{4l-2, 4l-1}(t_l+\theta_l) \qquad (1\le l\le 2n-1)
\end{split}
\end{equation*}
where $\eta_l t_l, \theta_l\in\mathbb{R}$. Then similarly to (4.1) 
we see that the commutative property    
\begin{equation}
D_l(\eta_l, t_l, \theta_l)d(t')=d(t')D_l(\eta_l, t_l, \theta_l)
\quad (t'\in \mathbb{R})
\end{equation}
holds. Let $P_l\subset G$ be the subspace consisting of 
$D_k(\eta_l, t_l, \theta_k)d(-t_l)$ $(\eta_l, t_l, \theta_l\in\mathbb{R})$. 
Then it follows that it forms the total space of a principal $S$-bundle over $T_l=S^1\times S^1$ along with the projection map of $p_l : P_k\to T_l$ given by $D_l(\eta_l, t_l \theta_l)d(-t_l)\to (e^{\eta_qi}, e^{(t_l+\theta_l)i})$ where $T_l$ can be considered as a subspace of $G/S$ under 
$\iota_l : (e^{\eta_qi}, e^{(t_l+\theta_l)i})\to p(D_l(\eta_l, t_l, \theta_l))$. 

Let $\mu_l : T_l\to S^2$ be the map given by 
\begin{equation*}
(e^{\eta_li}, e^{(t_l+\theta_l)i}) \to \left\{
\begin{array}{ll}
(\cos(\eta_l/2),  e^{(t_l+\theta_l)i}\sin(\eta_l/2))_R & \ (0\le \eta_l\le \pi)\\
(-\cos(\eta_l/2),  e^{t_l\theta_lt_{\eta_l}i}\sin(\eta_l/2))_R & \ (\pi\le \eta_l< 2\pi),\quad 
t_{\eta_l}=2-\eta_l/\pi.
\end{array}
\right.
\end{equation*}
Taking into account the fact that a principal circle bundle over $S^1$ is trivial we also see that the classifying map of $p_l$ factors through $S^2$ where  
the restriction of $p_l$ to $\{1\}\times S^1\subset T_l$ is viewd as being trivial. Then similarly to Lemma 3.1 we have 
\begin{lem}[cf.\, ~\cite{LS}, \!\S2, Example 3] $p_l : P_l\to T_l$ is isomorphic to the induced bundle of the complex 
Hopf bundle $p : SU(2)\to S^2$ by $\mu_l$ and $\mu_l$ also induces an isomorphism $H^2(S^2, \mathbb{Z})\cong H^2(T_l, \mathbb{Z})$ for 
$1\le l\le 2n-1$.  
\end{lem}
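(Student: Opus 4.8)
The proof will follow the template of Lemma 3.1. For the first assertion it suffices to construct a bundle map $\tilde\mu_l\colon P_l\to SU(2)$ covering $\mu_l$, since any principal $S$-bundle admitting a bundle map over $\mu_l$ into the Hopf bundle $p\colon SU(2)\to S^2$ is isomorphic to $\mu_l^*p$. The second assertion follows at once from the definition of $\mu_l$, exactly as in Lemma 3.1: $\mu_l$ is surjective and collapses the circle $\{1\}\times S^1\subset T_l$ (on which $\eta_l=0$) to a single point, so it is a degree one map and therefore induces an isomorphism $H^2(S^2,\mathbb{Z})\cong H^2(T_l,\mathbb{Z})$.

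To produce the bundle map, mimicking the formula in the proof of Lemma 3.1 I would set
\[
\tilde\mu_l\bigl(D_l(\eta_l,t_l,\theta_l)d(-t_l)\bigr)=
\left\{
\begin{array}{ll}
R\bigl(e^{-t_l i}\cos(\eta_l/2),\ e^{\theta_l i}\sin(\eta_l/2)\bigr) & (0\le\eta_l\le\pi),\\
R\bigl(-e^{-t_l i}\cos(\eta_l/2),\ e^{\theta_l t_{\eta_l} i}\sin(\eta_l/2)\bigr) & (\pi\le\eta_l<2\pi),
\end{array}
\right.
\]
with $t_{\eta_l}=2-\eta_l/\pi$ as in the definition of $\mu_l$. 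I would then verify: (a) this is well defined and continuous — the two branches match at $\eta_l=\pi$, both giving $R(0,e^{\theta_l i})$, and the values at $\eta_l=0$ and as $\eta_l\to 2\pi^-$ agree, both giving $R(e^{-t_l i},0)$; (b) $\tilde\mu_l$ covers $\mu_l$, which one reads off by composing with the Hopf projection $R(r\zeta,u)\mapsto(r,u\zeta)_R$ and comparing with the definition of $\mu_l$; (c) $\tilde\mu_l$ is $S$-equivariant, intertwining right multiplication by $d(t')$ on $P_l$ with the circle action on $SU(2)$; and (d) over $\{1\}\times S^1$ the formula is compatible with the trivialization of $p_l$ used to factor its classifying map through $S^2$, a trivialization available because a principal circle bundle over $S^1$ is trivial.

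The step I expect to be the real obstacle is (c). The factors $d(-t_l)$ and $d(t')$ are the products $\prod_{l'=1}^{4n-2}z_{2l',2l'+1}(\cdot)$, which touch coordinates lying outside the support of $D_l(\eta_l,t_l,\theta_l)$, so to control how they interact one must pass these extra generators through $D_l$ by means of the commutativity relation (5.1) — and behind it (4.1), with its sign $\epsilon$. This is exactly what is needed to confirm simultaneously that $P_l$ is $S$-invariant and that the assignment above is constant on $S$-orbits, hence descends to a genuine map $P_l\to SU(2)$ of total spaces covering $\mu_l$. Once this bookkeeping is in place the checks (a), (b), (d) are routine and entirely parallel to those in Section 3, which completes the proof.
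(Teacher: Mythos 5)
Your proposal is correct and follows essentially the same route as the paper: the paper's proof likewise just exhibits the bundle map $\tilde{\mu}_l(D_l(\eta_l,t_l,\theta_l)d(-t_l))=M(e^{-t_li}\cos(\eta_l/2),\,e^{\theta_li}\sin(\eta_l/2))$ (respectively the branch with $-e^{-t_li}$ and $e^{\theta_lt_{\eta_l}i}$ for $\pi\le\eta_l<2\pi$), with $M(a,b)$ the same $SU(2)$ matrix you denote by $R(a,b)$, and dismisses the cohomology statement as immediate from the definition of $\mu_l$. Your extra checks (continuity at $\eta_l=\pi$, covering, $S$-equivariance via the commutation relations (5.1)/(4.1), compatibility over $\{1\}\times S^1$) are exactly the details the paper leaves implicit by appealing to the symplectic case.
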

\begin{proof}
For the same reason as in the symplectic case above we have a bundle map 
$\tilde{\mu}_l : P_l\to SU(2)$ covering $\mu_l$ which can be induced by  
\begin{equation*}
D_l(\eta_l, t_l, \theta_l)d(-t_l)\to \left\{
\begin{array}{ll}
M(e^{-t_li}\cos(\eta_l/2),  e^{\theta_l i}\sin(\eta_l/2)) & \ (0\le \eta_l\le \pi)\\
M(-e^{-t_li}\cos(\eta_l/2),  e^{\theta_lt_{\eta_l} i}\sin(\eta_l/2)) & \ (\pi\le\eta_l<2\pi)
\end{array}
\right.
\end{equation*}
where $t_{\eta_l}=2-\eta_l/\pi$ and $M(a, b)$ is as above. This shows that 
the first equation holds true. The second equation is immediate from the definition of $\mu_l$.
\end{proof}

Let $D(\eta_l, t_l, \theta_l)=\textstyle\prod_{l=1}^{2n-1}
D_l(\eta_l, t_l, \theta_l)$ and $T^{2n-1}=\textstyle\prod_{l=1}^{2n-1}T_l$. 
Let $\iota_{2n-1} : T^{2n-1}\to G/S$ be the map given by 
$\lambda=(\lambda_1, \ldots, \lambda_{2n-1})\to 
p(D(\eta_l, t_l, \theta_l))$ where 
$\lambda_l=(e^{\eta_li}, e^{\tau_l i})$ with $t_l+\theta_l$ replaced by $\tau_l$ 
for simplicity. 
\begin{lem}
$\iota_{2n-1}$ is an injective map.
\end{lem}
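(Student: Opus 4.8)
The plan is to follow the pattern of the earlier injectivity arguments (e.g.\ Lemma 2.3), exploiting that the blocks $D_l(\eta_l,t_l,\theta_l)$ occupy pairwise disjoint sets of Clifford generators. Suppose $\iota_{2n-1}(\lambda)=\iota_{2n-1}(\lambda')$ with $\lambda_l=(e^{\eta_li},e^{\tau_li})$ and $\lambda'_l=(e^{\eta'_li},e^{\tau'_li})$. Since $S=\{d(s):s\in\mathbb{R}\}$ acts by right multiplication, this means
\[
\textstyle\prod_{l=1}^{2n-1}D_l(\eta_l,t_l,\theta_l)=\Bigl(\prod_{l=1}^{2n-1}D_l(\eta'_l,t'_l,\theta'_l)\Bigr)d(s)
\]
for some $s\in\mathbb{R}$, and the goal is to conclude $e^{\eta_li}=e^{\eta'_li}$ and $e^{\tau_li}=e^{\tau'_li}$ for every $l$.

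First I would set up the bookkeeping. Each $D_l(\eta_l,t_l,\theta_l)=z_{4l-2,4l}(\eta_l)\,z_{4l-2,4l-1}(\tau_l)$ is an even element of $Cl_{8n-2}$ supported on the indices $4l-2,4l-1,4l$; even elements supported on disjoint index sets commute and are linearly independent, and by (5.1) every $D_l$ commutes with $d$. The factor $z_{2j,2j+1}(s)$ of $d(s)$ is $z_{4l-2,4l-1}(s)$ when $j=2l-1$ and $z_{4l,4l+1}(s)$ when $j=2l$, and as $l$ runs over $1,\dots,2n-1$ these exhaust $j=1,\dots,4n-2$; hence $d(s)=\prod_{l=1}^{2n-1}\delta_l(s)$ with $\delta_l(s)=z_{4l-2,4l-1}(s)\,z_{4l,4l+1}(s)$, each $\delta_l(s)$ being even and supported on $4l-2,4l-1,4l,4l+1$. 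Therefore distinct slices $\delta_l(s)$ commute, and each $\delta_l(s)$ commutes with $D_{l'}(\cdot)$ for $l'\neq l$, so the displayed identity regroups as $\prod_lD_l(\eta_l,t_l,\theta_l)=\prod_l\bigl(D_l(\eta'_l,t'_l,\theta'_l)\,\delta_l(s)\bigr)$: a product of one factor per block, the $l$-th factor supported on $\{4l-2,4l-1,4l,4l+1\}$, so that the two sides can be read off block by block.

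Next I would remove $d(s)$. Expanding $\delta_l(s)=z_{4l-2,4l-1}(s)\bigl(\cos s+e_{4l}e_{4l+1}\sin s\bigr)$, the $l$-th factor on the right has a component proportional to $e_{4l+1}$ whose coefficient is a nonzero multiple of $\sin s$, whereas the $l$-th factor on the left is supported on $\{4l-2,4l-1,4l\}$ and has no $e_{4l+1}$-component; comparing the $e_{4l+1}$-parts forces $\sin s=0$. As $4n-2$ is even, $\sin s=0$ gives $d(s)=\prod_jz_{2j,2j+1}(s)=(\cos s)^{4n-2}=1$, so the identity becomes $\prod_lD_l(\eta_l,t_l,\theta_l)=\prod_lD_l(\eta'_l,t'_l,\theta'_l)$; reading this off block by block (and clearing the residual central signs as in the cancellation steps of Lemma 2.3) gives $D_l(\eta_l,t_l,\theta_l)=D_l(\eta'_l,t'_l,\theta'_l)$ for each $l$. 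Finally, from
\[
D_l(\eta_l,t_l,\theta_l)=\cos\eta_l\cos\tau_l+\cos\eta_l\sin\tau_l\,e_{4l-2}e_{4l-1}+\sin\eta_l\cos\tau_l\,e_{4l-2}e_{4l}-\sin\eta_l\sin\tau_l\,e_{4l-1}e_{4l}
\]
the four real coefficients recover $(e^{\eta_li},e^{\tau_li})$, just as $\mu_l$ does in Lemma 5.1; hence $\lambda_l=\lambda'_l$ for every $l$, i.e.\ $\lambda=\lambda'$.

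The part I expect to be delicate is the bookkeeping in the second paragraph — tracking precisely which of the $4n-2$ factors of $d(s)$ commute past which $D_l$, so that $d(s)$ genuinely splits into the per-block slices $\delta_l(s)$ — together with the concluding normalization: extracting $\sin s=0$, and hence $d(s)=1$, from the ``extra'' directions $e_{4l+1}$ that never appear on the left, and then disposing of the leftover central signs when reading the torus coordinates off each block. This last point is presumably why the injectivity of $\iota_{2n-1}$ is isolated as a separate lemma rather than absorbed into the degree computation, as happens in the symplectic case.
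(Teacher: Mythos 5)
Your handling of the coset ambiguity is the strongest part of the attempt, and it is actually more careful than the paper: the paper's proof simply interprets $\iota_{2n-1}(\lambda)=\iota_{2n-1}(\lambda')$ as the equation $\prod_l D_l(\eta_l,t_l,\theta_l)=\prod_l D_l(\eta'_l,t'_l,\theta'_l)$ in $G$ (no $d(s)$ at all) and then cancels $z_{2,4}(-\eta_1)$, $z_{2,3}(-\tau_1),\dots$ successively from the left, while you justify discarding $d(s)$ by splitting it into the slices $\delta_l(s)=z_{4l-2,4l-1}(s)z_{4l,4l+1}(s)$, forcing $\sin s=0$ from the $e_{4l+1}$-directions, and using that $4n-2$ is even to get $d(s)=(\cos s)^{4n-2}=1$. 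All of that is correct.

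The genuine gap is in your final step. From $\prod_l D_l=\prod_l D'_l$ with factors supported on disjoint blocks you can only conclude $D'_l=\epsilon_l D_l$ with $\epsilon_l=\pm1$ and $\prod_l\epsilon_l=1$; there is no mechanism for ``clearing the residual central signs,'' because unlike Lemma 2.3 there are no positivity constraints (nothing plays the role of $r_{i;k},s_{i;k}>0$) that exclude $\epsilon_l=-1$. Worse, even exact equality $D_l=D'_l$ does not recover $(e^{i\eta_l},e^{i\tau_l})$: the four coefficients in your displayed expansion are the outer product of $(\cos\eta_l,\sin\eta_l)$ and $(\cos\tau_l,\sin\tau_l)$ and are invariant under the simultaneous shift $(\eta_l,\tau_l)\mapsto(\eta_l+\pi,\tau_l+\pi)$; indeed $D_l(\eta_l+\pi,t_l,\theta_l+\pi)=(-1)(-1)D_l(\eta_l,t_l,\theta_l)=D_l(\eta_l,t_l,\theta_l)$, so $\iota_{2n-1}$ identifies $(e^{i\eta_l},e^{i\tau_l})$ with $(-e^{i\eta_l},-e^{i\tau_l})$ in each factor $T_l$. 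So your last two sentences cannot be carried out, and no coefficient comparison can close the gap with $T_l$ parametrized as $S^1\times S^1$ in this way. For what it is worth, the paper's own proof stumbles at exactly the same point: from $\sin(\eta'_1-\eta_1)=0$ it concludes $\eta_1\equiv\eta'_1\pmod{2\pi}$, silently discarding the branch $\eta'_1-\eta_1\equiv\pi$, which is precisely the central element $-1$ of the spin group that your $\epsilon_l$'s record. So your argument reproduces the paper's approach (block-by-block Clifford comparison) with better bookkeeping, but the sign issue you wave away is a real gap — repairing it seems to require reinterpreting $T_l$ (e.g.\ as the quotient of $S^1\times S^1$ by the involution above, or choosing a different parametrization), not a sharper comparison of coefficients.
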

\begin{proof}
Suppose $\iota_{2n-1}(\lambda)=\iota_{2n-1}(\lambda')$ where 
$\lambda'=(\lambda'_1, \ldots,\lambda'_{2n-1})$ and 
$\lambda'_l=(e^{\eta'_li}, e^{\tau'_l i})$. 
This can be viewed as meaning 
\[\textstyle\prod_{l=1}^{2n-1}z_{4l-2, 4l}(\eta_l)z_{4l-2, 4l-1}
(\tau_l)
=\textstyle\prod_{l=1}^{2n-1}z_{4l-2, 4l}(\eta'_l)z_{4l-2, 4l-1}
(\tau'_l).\]
Multiplying by $z_{2, 4}(-\eta_1)$ from the left we have 
\begin{equation*}
\begin{split}
z_{2, 3}(\tau_1)&\textstyle\prod_{l=2}^{2n-1}z_{4l-2, 4l}(\eta_l)
z_{4l-2, 4l-1}(\tau_l)\\
&=z_{2, 4}(\eta'_1-\eta_1)z_{2, 3}(\tau'_1)
\textstyle\prod_{l=2}^{2n-1}z_{4l-2, 4l}(\eta'_l)z_{4l-2, 4l-1}(\tau'_l).
\end{split}
\end{equation*}
Then by definition it must be that $\sin(\eta'_1-\eta_1)=0$, so 
$\eta\equiv\eta'$ $(\!\!\!\!\mod 2\pi)$. This implies that 
$z_{2, 4}(\tau_1)=z_{2, 4}(\tau')$ and therefore the equation above can be transformed into 
\begin{equation*}
z_{2, 3}(\tau_1)\textstyle\prod_{l=2}^{2n-1}z_{4l-2, 4l}(\tau_l)
z_{4l-2, 4l-1}(\theta_l)=z_{2, 3}(\tau'_1)
\textstyle\prod_{l=2}^{2n-1}z_{4l-2, 4l}(\eta'_l)z_{4l-2, 4l-1}(\tau'_l).
\end{equation*}
Multiplying by $z_{2, 3}(-\tau_1)$ from the left similarly we obtain 
$\tau_1=\tau'_1$ 
$(\!\!\!\!\mod 2\pi)$ and hence $\lambda_1=\lambda'_1$. This also allows us to rewrite the above equality as 
\begin{equation*}
\textstyle\prod_{l=2}^{2n-1}z_{4l-2, 4l}(\eta_l)z_{4l-2, 4l-1}(\tau_l)
=\textstyle\prod_{l=2}^{2n-1}z_{4l-2, 4l}(\eta'_q)z_{4l-2, 4l-1}(\tau'_l).
\end{equation*}
From this we see that applying the process above to the cases $l=2, \ldots, 
2n-1$ successively we can gets the results $\lambda_l=\lambda'_l$  
$(2\le l\le 2n-1)$ which leads us the 
conclusion that $\lambda=\lambda'$. Thus the lemma is proved.
\end{proof}
From Lemmas 5.1 and 5.2, due to (5.1), we have
\begin{cor} $\iota_{2n-1}^*E\cong L^{\boxtimes{(2n-1)}}$.
\end{cor}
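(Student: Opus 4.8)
The plan is to identify the pullback $\iota_{2n-1}^{*}p$ of the principal $S$-bundle $p\colon G\to G/S$ with the principal $S$-bundle underlying $L^{\boxtimes(2n-1)}$, and then pass to associated complex line bundles. Recall that for principal $S$-bundles $Q_{l}\to X_{l}$ $(1\le l\le 2n-1)$ with associated line bundles $L_{l}$, the exterior tensor product $L_{1}\boxtimes\cdots\boxtimes L_{2n-1}$ over $X_{1}\times\cdots\times X_{2n-1}$ has as its underlying principal $S$-bundle the quotient $(Q_{1}\times\cdots\times Q_{2n-1})/K$, where $K=\ker\bigl(S^{2n-1}\to S\bigr)$ is the kernel of the multiplication homomorphism and $S\cong S^{2n-1}/K$ acts in the residual manner. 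In our case $X_{l}=T_{l}$, and by Lemma 5.1 the bundle $p_{l}\colon P_{l}\to T_{l}$ is isomorphic to the pullback by $\mu_{l}$ of the complex Hopf bundle, so its associated line bundle is $L$. Hence it suffices to produce an $S$-equivariant map $(P_{1}\times\cdots\times P_{2n-1})/K\to G$ lying over $\iota_{2n-1}$.

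First I would form the multiplication map $m\colon P_{1}\times\cdots\times P_{2n-1}\to G$, $(g_{1},\dots,g_{2n-1})\mapsto g_{1}\cdots g_{2n-1}$, where $g_{l}=D_{l}(\eta_{l},t_{l},\theta_{l})d(-t_{l})$. Using the commutation relation (5.1), which says that every $d(t')$ commutes with every $D_{l}$, one collects all the $d(\cdot)$-factors to the right and obtains $m(g_{1},\dots,g_{2n-1})=D(\eta_{l},t_{l},\theta_{l})\,d\bigl(-(t_{1}+\cdots+t_{2n-1})\bigr)$ with $D=\prod_{l}D_{l}$. Applying $p$ and absorbing the trailing factor then gives $p\circ m=\iota_{2n-1}\circ(p_{1}\times\cdots\times p_{2n-1})$. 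The same relation shows that $m$ is equivariant for the multiplication homomorphism $S^{2n-1}\to S$, namely $m\bigl(g_{1}d(s_{1}),\dots,g_{2n-1}d(s_{2n-1})\bigr)=m(g_{1},\dots,g_{2n-1})\,d(s_{1}+\cdots+s_{2n-1})$. In particular $m$ is constant on $K$-orbits and descends to an $S$-equivariant map $\bar m\colon(P_{1}\times\cdots\times P_{2n-1})/K\to G$ lying over $\iota_{2n-1}$.

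This $\bar m$ is a morphism of principal $S$-bundles over $T^{2n-1}$, from the bundle underlying $L^{\boxtimes(2n-1)}$ to $\iota_{2n-1}^{*}p$, covering the identity of the base; any such morphism is automatically an isomorphism. Therefore $\iota_{2n-1}^{*}p$ is isomorphic, as a principal $S$-bundle, to the one underlying $L^{\boxtimes(2n-1)}$, and passing to associated line bundles gives $\iota_{2n-1}^{*}E\cong L^{\boxtimes(2n-1)}$. This is exactly the argument used for the isomorphism (3.1) in the symplectic case, with (5.1) and Lemma 5.1 now in the roles of the commutation identities of \S2 and Lemma 3.1.

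The only step requiring real care is the bookkeeping above: checking that the commutations dictated by (5.1) really do move every $d(\cdot)$ past every $D_{l}$ through the whole product, and tracking the induced $S^{2n-1}$-action with the correct signs so that the descent through $K$ is legitimate. Since this merely repeats the computation already carried out for Corollary 2.2 and for (3.1), no new difficulty arises, and in the write-up it can be recorded simply as ``by the same analysis as for (3.1)''.
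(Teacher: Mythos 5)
Your proof is correct and takes essentially the same route as the paper: the paper's justification of this corollary is just the phrase ``From Lemmas 5.1 and 5.2, due to (5.1)'', and the ingredients you spell out -- the multiplication map $P_1\times\cdots\times P_{2n-1}\to G$, the commutation relation (5.1) used to collect the $d(\cdot)$-factors, the descent through $K=\ker(S^{2n-1}\to S)$, and Lemma 5.1 identifying each associated line bundle with $L$ -- are precisely the implicit content of that phrase, in parallel with Corollary 2.2 and (3.1) in the symplectic case. The only minor divergence is that the paper also cites the injectivity Lemma 5.2, which, as your argument shows, is not actually needed for the pullback isomorphism itself.
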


Let $\psi : T^{2n-1}\times (S^2)^{16n^2-12n+2} \to G/S$ be the map given by 
\[(\lambda, x) \to 
p\bigl(D(\eta_l, t_l, \theta_l)\textstyle\prod_{m=1}^{4n-1}
\bigl(\textstyle\prod_{k=1}^{4n-2}(r_{k, m}+e_{2m-1}e_{2k}w_{k, m})\bigr)\bigr)
\] 
where $\lambda, x$ are as above. 

\begin{lem}
The restriction of $\psi$ to $(T^{2n-1})^\circ\times ((S^2)^{16n^2-12n+2})^\circ$  is an injective map.
\end{lem}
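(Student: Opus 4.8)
The plan is to transplant the proof of Lemma 3.2 to the spin setting, feeding in the inductive reductions of Lemma 5.2 for the toral factor $D(\eta_l,t_l,\theta_l)$ and of Lemma 4.2 for the spherical factor $\prod_{m}\prod_{k}(r_{k,m}+e_{2m-1}e_{2k}\omega_{k,m})$. So I would begin from the hypothesis $\psi(\lambda,x)=\psi(\lambda',x')$ with $(\lambda,x),(\lambda',x')$ in the indicated open set; by definition this can be viewed as an identity in $G\subset Cl_{8n-2}^{0}$ of the form
\[
D(\eta_l,t_l,\theta_l)\,\textstyle\prod_{m=1}^{4n-1}\textstyle\prod_{k=1}^{4n-2}\bigl(r_{k,m}+e_{2m-1}e_{2k}\omega_{k,m}\bigr)
=D(\eta'_l,t'_l,\theta'_l)\,\textstyle\prod_{m=1}^{4n-1}\textstyle\prod_{k=1}^{4n-2}\bigl(r'_{k,m}+e_{2m-1}e_{2k}\omega'_{k,m}\bigr),
\]
in which all $r_{k,m},r'_{k,m}>0$ and $e^{\eta_l i}\neq 1\neq e^{\eta'_l i}$ for every $l$. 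As at the start of the proof of Lemma 3.2, everything reduces to showing how to peel off the first toral block $D_1$ from both sides: the blocks $D_2,\dots,D_{2n-1}$ then come off by the identical argument, after which the residual pure-spherical identity is settled by Lemma 4.2.

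To peel off $D_1(\eta_1,t_1,\theta_1)=z_{2,4}(\eta_1)z_{2,3}(\tau_1)$ (with $\tau_1=t_1+\theta_1$) I would run the bookkeeping of Lemma 5.2 in the presence of the extra spherical product. Multiply the identity on the left by $z_{2,4}(-\eta_1)$; when the spherical product is expanded in the monomial basis of $Cl_{8n-2}^{0}$ its degree-zero part has leading coefficient $\prod_{m,k}r_{k,m}>0$ — the positivity exploited in the proof of Lemma 4.1 — and tracking the Clifford relations exactly as in Lemma 5.2 forces $\sin(\eta_1-\eta'_1)=0$, i.e. $\eta_1\equiv\eta'_1\pmod{2\pi}$, whence $\eta_1=\eta'_1$ because $e^{\eta_1i}\neq1\neq e^{\eta'_1i}$. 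Cancelling $z_{2,4}(\eta_1)$ and repeating the manoeuvre with $z_{2,3}(-\tau_1)$ gives $\tau_1\equiv\tau'_1\pmod{2\pi}$, and hence $\lambda_1=\lambda'_1$ and $D_1(\eta_1,t_1,\theta_1)=D_1(\eta'_1,t'_1,\theta'_1)$, which may be cancelled.

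Iterating over $l=2,\dots,2n-1$ — at each stage the surviving spherical product still carries a strictly positive leading degree-zero coefficient, so the same argument applies — we obtain $\lambda=\lambda'$, i.e. $D(\eta_l,t_l,\theta_l)=D(\eta'_l,t'_l,\theta'_l)$, and the identity collapses to
\[
\textstyle\prod_{m=1}^{4n-1}\textstyle\prod_{k=1}^{4n-2}\bigl(r_{k,m}+e_{2m-1}e_{2k}\omega_{k,m}\bigr)
=\textstyle\prod_{m=1}^{4n-1}\textstyle\prod_{k=1}^{4n-2}\bigl(r'_{k,m}+e_{2m-1}e_{2k}\omega'_{k,m}\bigr).
\]
This is exactly the equation treated in the proof of Lemma 4.2, so that lemma gives $x=x'$; hence $(\lambda,x)=(\lambda',x')$ and $\psi$ is injective on the stated subset.

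The step I expect to be the real obstacle is the Clifford-algebra bookkeeping in peeling off the $D_l$. Because the index set $\{4l-2,4l-1,4l\}$ of $D_l$ overlaps the indices $2m-1$, $2k$, $2k+1$ occurring in the spherical blocks, one cannot simply invoke ``the spherical product involves no $e_4$'' as in the proof of Lemma 5.2; instead — just as in the analysis of the case $k=0$ in the proof of Lemma 3.2 — one has to extract the relevant components by hand and check that the positive leading coefficient survives the successive left multiplications and that the sign conventions of (4.1) and (4.3) cause no trouble. Once this is organised, the reduction above is routine, exactly as in the symplectic case.
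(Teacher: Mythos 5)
Your proposal is correct and follows essentially the same route as the paper: interpret $\psi(\lambda,x)=\psi(\lambda',x')$ as a single identity in $Cl_{8n-2}^{0}$, first peel off the toral blocks $D_l$ by the methods of Lemmas 5.2/4.1--4.2 (using $r_{k,m},r'_{k,m}>0$) to get $\lambda=\lambda'$, then cancel $D(\eta_l,t_l,\theta_l)$ and invoke the argument of Lemmas 4.1--4.2 on the residual spherical identity to get $x=x'$. The Clifford-algebra bookkeeping you single out as the real obstacle is exactly the point the paper passes over with ``taking into account the composition of the latter product elements,'' so your write-up is, if anything, more explicit than the paper's own proof.
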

\begin{proof}
This can be done in a similar way to Lemmas 4.1, 4.2 and 5.2. 
Suppose $\psi(\lambda, x)=\psi(\lambda', x')$ in the notation above. 
Similarly to the above, we also think of this equation as 
\begin{equation*}
\begin{split}
\bigl(\textstyle\prod_{l=1}^{2n-1}&z_{4l-2, 4l}(\eta_l)z_{4l-2, 4l-1}
(\tau_l)\bigl)\,\bigl(\textstyle\prod_{m=1}^{4n-1}
\bigl(\textstyle\prod_{k=1}^{4n-2}(r_{k, m}+e_{2m-1}e_{2k}w_{k, m})\bigr)\bigr)
\\
&=\bigl(\textstyle\prod_{l=1}^{2n-1}z_{4l-2, 4l}(\eta'_l)z_{4l-2, 4l-1}
(\tau'_l)\bigr)\,\bigl(\textstyle\prod_{m=1}^{4n-1}
\bigl(\textstyle\prod_{k=1}^{4n-2}(r'_{k, m}+e_{2m-1}e_{2k}w'_{k, m})\bigr)\bigr)
\end{split}
\end{equation*}
where in this case $r_{k, m}>0$ and, $r'_{k, m}>0$. The product elements on both sides can be devided into two parts, the formers of which are the product elements with respect to the variables $\lambda$ and $\lambda'$, and the latters are the product elements with respect to the variables $x$ and $x'$, respectively. 

First we apply the method of proof of Lemma 4.2 to this equation taking into account the composition of the latter product elements, and thereby obtain
$\lambda=\lambda'$. Owing to this result the above equation can be also transformed into 
\begin{equation*}
\textstyle\prod_{m=1}^{4n-1}
\bigl(\textstyle\prod_{k=1}^{4n-2}(r_{k, m}+e_{2m-1}e_{2k}w_{k, m})\bigr)
=\textstyle\prod_{m=1}^{4n-1}
\bigl(\textstyle\prod_{k=1}^{4n-2}(r'_{k, m}+e_{2m-1}e_{2k}w'_{k, m})\bigr).
\end{equation*}
Using this, as seen in the proofs of Lemmas 4.1, 4.2, we can obtain
$x=x'$, thereby achieving $(\lambda, x)=(\lambda', x')$. This 
proves the lemma.
\end{proof}
\begin{proof}[Proof of $\mathrm{(ii)}$] 
The proof is similar to that of (i) and is proceeded as follows. 
Put $B=T^{2n-1}\times (S^2)^{16n^2-12n+2}$. Then $\dim B=\dim G/S$. From 
this and Lemma 5.4, due to the construction, it follows that $\psi$ is an onto map of degree one. So as in the proof of the case (i) we have $\psi_*([B])=[G/S]$ and therefore 
\begin{equation*}
\begin{split} 
&\langle c_1(E)^{16n^2-10n+1}, \, [G/S]\rangle
=\langle (c_1(\psi_*E)^{16n^2-10n+1}), \, [B]\rangle. \\
&=\langle c_1(L^{\boxtimes(2n-1)}, \, [T^{2n-1}]\rangle\,
\langle  c_1(L^{\boxtimes(16n^2-12n+2)}), \, [(S^2)^{(16n^2-12n+2)}\rangle.
\end{split}
\end{equation*}
Substituting this into the equation of Proposition 2.1 of ~\cite{LS} we obtain
\begin{equation*}
e_{\mathbb C}([S(E), \Phi_E])=(-1)^nB_{8n^2-5n+1}/2(8n^2-5n+1)
\end{equation*}
where $\Phi_E$ denotes the trivialization of the stable tangent space 
of $S(E)$, i.e. of $G$ derived from the induced framing on $G/S$. 

Here when considering the induced framing on 
$G/S$ we have $2n-1$ parts on $G$ on which 
$\mathscr{L}$ must be interpreted as being trivial. This is due to adding the terms $D_l(\eta_l, t_l, \theta_l)$ $(1\le l\le 2n-1)$ to those of the map $\phi$. Let $\Delta$ denote the spin representation of $Spin(8n-2)$. Then we see that this discrepancy can be removed by replacing 
$\mathscr{L}$ with the framing obtained by twisting by $(2n-1)\Delta$. 
This proves (ii) and therefore the theorem is proven.
\end{proof}

\begin{remark} From the proof of the theorem we see that
\[e_\mathbb{C}([Sp(4n+1), \mathscr{L}])=0,\qquad 
e_\mathbb{C}([Spin(8n-2), \mathscr{L}])=0.\]
For example, in the case of $Spin(8n-2)$, as seen just above, the doubling of the framing occurred there can be dissolved by thinking of the restriction of $E$ to $T_l$ for every $l$ as a trivial complex line bundle. But instead its first Chern class becomes zero and so, according to Proposition 2.1 of ~\cite{LS}, the value of $e_\mathbb{C}$ must become zero. The same is true of the case of $Sp(4n+1)$.
\end{remark}

\end{document}